\newtheorem{theorem}{{\bf Theorem}}
\newtheorem{definition}{{\bf Definition}}
\newtheorem{lemma}{{\bf Lemma}}
\newtheorem{remark}{{\bf Remark}}
\begin{document}

\title{Sharp exponential inequalities in survey sampling:\\ conditional Poisson sampling schemes}
\author{Patrice Bertail\\
MODALX - Universit\'{e} Paris Ouest Nanterre\\
200 avenue de la R\'epublique, Nanterre 92100, France\\
patrice.bertail@u-paris10.fr\\
\and Stephan Cl\'{e}men\c{c}on \\
LTCI, Telecom ParisTech, Universit\'e Paris-Saclay\\ 46 rue Barrault, 75013, Paris, France\\
stephan.clemencon@telecom-paristech.fr}
\maketitle

\begin{abstract}%

This paper is devoted to establishing exponential bounds for the probabilities of deviation of a sample sum from its expectation, when the variables involved in the summation are obtained by sampling in a finite population according to a rejective scheme, generalizing sampling without replacement, and by using an appropriate normalization. In contrast to Poisson sampling, classical deviation inequalities in the i.i.d. setting do not straightforwardly apply to sample sums related to rejective schemes, due to the inherent dependence structure of the sampled points. We show here how to overcome this difficulty, by combining the formulation of rejective sampling as Poisson sampling conditioned upon the sample size with the Escher transformation. In particular, the Bennett/Bernstein type bounds established highlight the effect of the asymptotic variance $\sigma^2_{N}$ of the (properly standardized) sample weighted sum and are shown to be much more accurate than those based on the negative association property shared by the terms involved in the summation. Beyond its interest in itself, such a result for rejective sampling is crucial, insofar as it can be extended to many other sampling schemes, namely those that can be accurately approximated by rejective plans in the sense of the total variation distance.
\\
\textbf{AMS 2015 subject classification:} 60E15, 6205.\newline
\textbf{Keywords and phrases:} Exponential inequality ; Poisson survey scheme; rejective sampling; survey sampling; Escher transformation; coupling
\end{abstract}%

\section{Introduction}

Whereas many upper bounds for the probability that a sum of independent
real-valued (integrable) random variables exceeds its expectation by a
specified threshold value $t\in\mathbb{R}$ are documented in the literature
(see \textit{e.g.} \cite{BLM13} and the references therein), very few results
are available when the random variables involved in the summation are sampled
from a finite population according to a given survey scheme and next
appropriately normalized (using the related survey weights as originally
proposed in \cite{HT51} for approximating a total). The sole situation where
results in the independent setting straightforwardly carry over to survey
samples (without replacement) corresponds to the case where the variables are
sampled independently with possibly unequal weights, \textit{i.e.} Poisson
sampling. For more complex sampling plans, the dependence structure between
the sampled variables makes the study of the fluctuations of the resulting
weighted sum approximating the total (referred to as the
\textit{Horvitz-Thompson total estimate}) very challenging. The case of basic
\textit{sampling without replacement} (\textsc{SWOR} in abbreviated form) has
been first considered in \cite{Hoeffding63}, and refined in \cite{Serfling74}
and \cite{BardenetMaillard}. In contrast, the asymptotic behavior of the
Horvitz-Thompson estimator as $N$ tends to infinity is well-documented in the
litterature. Following in the footsteps of the seminal contribution
\cite{Hajek64}, a variety of limit results (\textit{e.g.} consistency,
asymptotic normality) have been established for Poisson sampling and next
extended to rejective sampling viewed as conditional Poisson sampling given
the sample size and to sampling schemes that are closed to the latter in a
\textit{coupling} sense in \cite{Rob82} and \cite{Ber98}. Although the nature
of the results established in this paper are nonasymptotic, these arguments
(conditioning upon the sampling size and coupling) are involved in their proofs.

It is indeed the major purpose of this article to extend tail bounds proved
for \textsc{SWOR} to the case of rejective sampling, a fixed size sampling
scheme generalizing it. The approach we develop is thus based on viewing
rejective sampling as conditional Poisson sampling given the sample size and
writing then the deviation probability as a ratio of two quantities: the joint
probability that a Poisson sampling-based total estimate exceeds the threshold
$t$ and the size of the cardinality of the Poisson sample equals the
(deterministic) size $n$ of the rejective plan considered in the numerator and
the probability that the Poisson sample size is equal to $n$ in the
denominator. Whereas a sharp lower bound for the denominator can be
straightforwardly derived from a local Berry-Esseen bound proved in
\cite{Deheuvels} for sums of independent, possibly non indentically
distributed, Bernoulli variables, an accurate upper bound for the numerator can be
established by means of an appropriate exponential change of measure
(\textit{i.e.} Escher transformation), following in the footsteps of the
method proposed in \cite{Talagrand95}, a refinement of the classical argument
of Bahadur-Rao's theorem in order to improve exponential bounds in the
independent setting. The tail bounds (of Bennett/Bernstein type) established
by means of this method are shown to be sharp in the sense that they
explicitely involve the 'small' asymptotic variance of the Horvitz-Thompson total
estimate based on rejective sampling, in contrast to those proved by using the
\textit{negative association} property of the sampling scheme.

The article is organized as follows. A few key concepts pertaining to survey
theory are recalled in section \ref{sec:background}, as well as specific
properties of Poisson and rejective sampling schemes. For comparison purpose,
preliminary tail bounds in the (conditional) Poisson case are stated in
section \ref{sec:Poisson}. The main results of the paper, sharper exponential
bounds for conditional Poisson sampling namely, are proved in section
\ref{sec:main}, while section \ref{sec:extension} explains how they can be
extended to other sampling schemes, sufficiently close to rejective sampling in the sense of the total variation norm. A few remarks are finally collected in section \ref{sec:concl} and some
technical details are deferred to the Appendix section.

\section{Background and Preliminaries}

\label{sec:background} As a first go, we start with briefly recalling basic
notions in survey theory, together with key properties of (conditional)
Poisson sampling schemes. Here and throughout, the indicator function of any
event $\mathcal{E}$ is denoted by $\mathbb{I}\{\mathcal{E}\}$, the power set
of any set $E$ by $\mathcal{P}(E)$, the variance of any square integrable r.v.
$Y$ by $Var(Y)$, the cardinality of any finite set $E$ by $\#E$ and the Dirac
mass at any point $a$ by $\delta_{a}$. For any real number $x$, we set
$x^{+}=\max\{x,\; 0 \}$, $x^{-}=\max\{ -x,\; 0 \}$, $\lceil x\rceil=\inf\{
k\in\mathbb{Z}:\; x\leq k \}$ and $\lfloor x \rfloor=\sup\{ k\in\mathbb{Z}:\;
k\leq x \}$.

\subsection{Sampling schemes and Horvitz-Thompson estimation}

Consider a finite population of $N\geq1$ distinct units, $\mathcal{I}%
_{N}=\{1,\ \ldots,\; N \}$ say, a survey sample of (possibly random) size
$n\leq N$ is any subset $s=\{i_{1},\; \ldots,\; i_{n(s)} \}\in\mathcal{P}%
(\mathcal{I}_{N})$ of size $n(s)=n$. A sampling design without replacement is
defined as a probability distribution $R_{N}$ on the set of all possible
samples $s\in\mathcal{P}(\mathcal{I}_{N})$. For all $i\in\mathcal{I}_{N}$, the
probability that the unit $i$ belongs to a random sample $S$ defined on a
probability space $(\Omega,\; \mathcal{F},\; \mathcal{P})$ and drawn from
distribution $R_{N}$ is denoted by $\pi_{i}=\mathbb{P}\{i\in S \}=R_{N}%
(\{i\})$. The $\pi_{i}$'s are referred to as \textit{first order inclusion
probabilities}. The \textit{second order inclusion probability} related to any
pair $(i,j)\in\mathcal{I}_{N}^{2}$ is denoted by $\pi_{i,j}=\mathbb{P}%
\{(i,j)\in S^{2} \}=R_{N}(\{i,\; j \})$ (observe that $\pi_{i,i}=\pi_{i}$).
Here and throughout, we denote by $\mathbb{E}[.]$ the $\mathbb{P}$-expectation
and by $Var(Z)$ the conditional variance of any $\mathbb{P}$-square integrable
r.v. $Z:\Omega\rightarrow\mathbb{R}$.

The random vector $\boldsymbol{\epsilon} _{N}=(\epsilon_{1},\; \ldots,\;
\epsilon_{N})$ defined on $(\Omega,\; \mathcal{F},\; \mathcal{P})$, where
$\epsilon_{i}=\mathbb{I}\{ i\in S \}$ fully characterizes the random sample
$S\in\mathcal{P}(\mathcal{I}_{N})$. In particular, the sample size $n(S)$ is
given by $n=\sum_{i=1}^{N}\epsilon_{i}$, its expectation and variance by
$\mathbb{E}[n(S)]=\sum_{i=1}^{N}\pi_{i}$ and $Var(n(S))=\sum_{1\leq i,\; j\leq
N}\{\pi_{i,j}-\pi_{i}\pi_{j}\}$ respectively. The $1$-dimensional marginal
distributions of the random vector $\boldsymbol{\epsilon} _{N}$ are the
Bernoulli distributions $Ber(\pi_{i})=\pi_{i}\delta_{1}+(1-\pi_{i})\delta_{0}%
$, $1\leq i \leq N$ and its covariance matrix is $\Gamma_{N}=(\pi_{i,j}%
-\pi_{i}\pi_{j})_{1\leq i,\;j \leq N}$. \medskip

We place ourselves here in the \textit{fixed-population} or
\textit{design-based} sampling framework, meaning that we suppose that a fixed
(unknown) real value $x_{i}$ is assigned to each unit $i\in\mathcal{I}_{N}$.
As originally proposed in the seminal contribution \cite{HT51}, the
Horvitz-Thompson estimate of the population total $S_{N}=\sum_{i=1}^{N} x_{i}$
is given by
\begin{equation}
\label{eq:HT}\widehat{S}_{\boldsymbol{\pi} _{N}}^{\boldsymbol{\epsilon} _{N}%
}=\sum_{i=1}^{N} \frac{\epsilon_{i}}{\pi_{i}}x_{i}=\sum_{i\in S}\frac{1}%
{\pi_{i}}x_{i},
\end{equation}
with $0/0=0$ by convention. Throughout the article, we assume that the
$\pi_{i}$'s are all strictly positive. Hence, the conditional expectation of
\eqref{eq:HT}  is $\mathbb{E}[\widehat{S}_{\boldsymbol{\pi} _{N}%
}^{\boldsymbol{\epsilon} _{N}}]=S_{N}$ and, in the case where the size of the
random sample is deterministic, its variance is given by
\begin{equation}
Var(\widehat{S}_{\boldsymbol{\pi} }^{\boldsymbol{\epsilon} _{N}})=\sum_{i<
j}\left(  \frac{x_{i}}{\pi_{i}}-\frac{x_{j}}{\pi_{j}} \right)  ^{2}%
\times\left(  \pi_{i}\pi_{j}-\pi_{i,j} \right)  .
\end{equation}

The goal of this paper is to establish accurate bounds for tail probabilities
\begin{equation}
\label{eq:tailprob}\mathbb{P}\{\widehat{S}_{\boldsymbol{\pi} _{N}%
}^{\boldsymbol{\epsilon} _{N}}-S_{N} >t \},
\end{equation}
where $t\in\mathbb{R}$, when the sampling scheme $\boldsymbol{\epsilon} _{N}$
is \textit{rejective}, a very popular sampling plan that generalizes \textit{random
sampling without replacement} and can be expressed as a conditional Poisson
scheme, as recalled in the following subsection for clarity. One may refer to
\cite{Dev87} for instance for an excellent account of survey theory, including
many more examples of sampling designs.

\subsection{Poisson and conditional Poisson sampling}

\label{subsec:Poisson} Undoubtedly, one of the simplest sampling plan is the
\textit{Poisson survey scheme} (without replacement), a generalization of
\textit{Bernoulli sampling} originally proposed in \cite{Goodman} for the case
of unequal weights: the $\epsilon_{i}$'s are independent and the sampling
distribution $P_{N}$ is thus entirely determined by the first order inclusion
probabilities $\mathbf{p}_{N}=(p_{1},\;\ldots,\;p_{N})\in]0,1[^{N}$:
\begin{equation}
\forall s\in\mathcal{P}(\mathcal{I}_{N}),\;\;P_{N}(s)=\prod_{i\in S}p_{i}%
\prod_{i\notin S}(1-p_{i}). \label{eq:Poisson}%
\end{equation}
Observe in addition that the behavior of the quantity \eqref{eq:HT}  can be
investigated by means of results established for sums of independent random
variables. However, the major drawback of this sampling plan lies in the
random nature of the corresponding sample size, impacting significantly the
variability of \eqref{eq:HT}. The variance of the Poisson sample size is
given by $d_{N}=\sum_{i=1}^{N}p_{i}(1-p_{i})$, while the variance of
\eqref{eq:HT}  is in this case:
\[
Var\left(  \widehat{S}_{\boldsymbol{\pi} _{N}}^{\boldsymbol{\epsilon} _{N}%
}\right)  =\sum_{i=1}^{N}\frac{1-p_{i}}{p_{i}}x_{i}^{2}.
\]
For this reason, \textit{rejective sampling}, a sampling design $R_{N}$ of
fixed size $n\leq N$, is often preferred in practice. It generalizes the
\textit{simple random sampling without replacement} (where all samples with
cardinality $n$ are equally likely to be chosen, with probability $(N-n)!/n!$,
all the corresponding first and second order probabilities being thus equal to
$n/N$ and $n(n-1)/(N(N-1))$ respectively). Denoting by $\boldsymbol{\pi}
_{N}^{R}=(\pi_{1}^{R},\;\ldots,\;\pi_{N})$ its first order inclusion
probabilities and by $\mathcal{S}_{n}=\{s\in\mathcal{P}(\mathcal{I}%
_{N}):\;\#s=n\}$ the subset of all possible samples of size $n$, it is defined
by:
\begin{equation}
\forall s\in\mathcal{S}_{n},\;\;R_{N}(s)=C\prod_{i\in s}p_{i}^{R}%
\prod_{i\notin s}(1-p_{i}^{R}), \label{eq:Rejective}%
\end{equation}
where $C=1/\sum_{s\in\mathcal{S}_{n}}\prod_{i\in s}p_{i}^{R}\prod_{i\notin
s}(1-p_{i}^{R})$ and the vector of parameters $\mathbf{p}_{N}^{R}=(p_{1}^{R},\;\ldots
,\;p_{N}^{R})\in]0,1[^{N}$ yields first order inclusion probabilities equal to
the $\pi_{i}^{R}$'s and is such that $\sum_{i=1}^{N}p_{i}^{R}=n$. Under this
latter additional condition, such a vector $\mathbf{p}_{N}^{R}$ exists and is
unique (see \cite{Dupacova}) and the related representation
\eqref{eq:Rejective}  is then said to be \textit{canonical}. Notice
incidentally that any vector $\mathbf{p}_{N}^{\prime}\in]0,1[^{N}$ such that
$p_{i}^{R}/(1-p_{i}^{R})=cp_{i}^{\prime}/(1-p_{i}^{\prime})$ for all
$i\in\{1,\;\ldots,\;n\}$ for some constant $c>0$ can be used to write a
representation of $R_{N}$ of the same type as \eqref{eq:Rejective}. Comparing
\eqref{eq:Rejective}  and \eqref{eq:Poisson}  reveals that rejective $R_{N}$
sampling of fixed size $n$ can be viewed as Poisson sampling given that the
sample size is equal to $n$. It is for this reason that rejective sampling is
usually referred to as \textit{conditional Poisson sampling}. For simplicity's
sake, the superscrit $R$ is omitted in the sequel. One must pay attention not
to get the $\pi_{i}$'s and the $p_{i}$'s mixed up (except in the SWOR case, where these quantities are all equal to $n/N$): the latter are the first
order inclusion probabilities of $P_{N}$, whereas the former are those of its
conditional version $R_{N}$. However they can be related by means of the
results stated in \cite{Hajek64} (see Theorem 5.1 therein, as well as Lemma \ref{lem:bias} in section \ref{sec:main} and \cite{BLRG12}): $\forall
i\in\{1,\;\ldots,\;N\}$,
\begin{align}
\pi_{i}(1-p_{i})  &  =p_{i}(1-\pi_{i})\times\left(  1-\left(  \tilde{\pi}%
-\pi_{i}\right)  /d_{N}^{\ast}+o(1/d_{N}^{\ast})\right)  ,\label{eq:rel1}\\
p_{i}(1-\pi_{i})  &  =\pi_{i}(1-p_{i})\times\left(  1-\left(  \tilde{p}%
-p_{i}\right)  /d_{N}+o(1/d_{N})\right)  , \label{eq:rel2}%
\end{align}
where $d_{N}^{\ast}=\sum_{i=1}^{N}\pi_{i}(1-\pi_{i})$, $d_{N}=\sum_{i=1}%
^{N}p_{i}(1-p_{i})$, $\tilde{\pi}=(1/d_{N}^{\ast})\sum_{i=1}^{N}\pi_{i}%
^{2}(1-\pi_{i})$ and $\tilde{p}=(1/d_{N})\sum_{i=1}^{N}(p_{i})^{2}(1-p_{i})$. \medskip

Since the major advantage of conditional Poisson sampling lies in its reduced
variance property (compared to Poisson sampling in particular, see the discussion in section \ref{sec:main}), focus is next
on exponential inequalities involving a variance term, of Bennett/Bernstein
type namely.

\section{Preliminary Results}

\label{sec:Poisson}

As a first go, we establish tail bounds for the Horvitz-Thompson estimator in
the case where the variables are sampled according to a Poisson scheme. We
next show how to exploit the \textit{negative association} property satisfied
by rejective sampling in order to extend the latter to conditional Poisson
sampling. Of course, this approach do not account for the reduced variance
property of Horvitz-Thompson estimates based on rejective sampling, it is the
purpose of the next section to improve these first exponential bounds.

\subsection{Tails bounds for Poisson sampling}

As previously observed, bounding the tail probability \eqref{eq:tailprob}  is
easy in the Poisson situation insofar as the variables summed up in
\eqref{eq:HT}  are independent though possibly non identically distributed
(since the inclusion probabilities are not assumed to be all equal). The
following theorem thus directly follows from well-known results related to
tail bounds for sums of independent random variables.

\begin{theorem}
\label{thm:poisson}\textsc{(Poisson sampling)} Assume that the survey scheme
$\boldsymbol{\epsilon} _{N}$ defines a Poisson sampling plan with first order
inclusion probabilities $p_{i}>0$, with $1\leq i \leq N$. Then, we
almost-surely have: $\forall t>0$, $\forall N\geq1$,
\begin{align}
\mathbb{P}\left\{  \widehat{S}_{\boldsymbol{p} _{N}}^{\boldsymbol{\epsilon}
_{N}}-S_{N} >t \right\}   &  \leq\exp\left(  -\frac{\sum_{i=1}^{N}%
\frac{1-p_{i}}{p_{i}}x_{i}^{2}}{\left(  \max_{1\leq i \leq N}\frac{x_{i}%
}{p_{i}} \right)  ^{2}} H\left(  \frac{\max_{1\leq i \leq N}\frac{\vert
x_{i}\vert}{p_{i}}t}{\sum_{i=1}^{N}\frac{1-p_{i}}{p_{i}}x_{i}^{2}} \right)
\right) \label{eq:Bennett}\\
&  \leq\exp\left(  \frac{-t^{2}}{\frac{2}{3}\max_{1\leq i\leq N}\frac{\vert
x_{i}\vert}{p_{i}}+ 2\sum_{i=1}^{N}\frac{1-p_{i}}{p_{i}}x_{i}^{2}} \right)  ,
\label{eq:Bern}%
\end{align}
where $H(x)=(1+x)\log(1+x)-x$ for $x\geq0$.
\end{theorem}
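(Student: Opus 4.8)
The plan is to recognize the left-hand side as a deviation probability for a sum of independent bounded centered random variables and to apply the classical Bennett inequality. Since $\boldsymbol{\epsilon}_N$ defines a Poisson scheme, the $\epsilon_i$'s are independent with $\epsilon_i\sim Ber(p_i)$, and, recalling that $\mathbb{E}[\epsilon_i]=p_i$,
\[
\widehat{S}_{\boldsymbol{p}_N}^{\boldsymbol{\epsilon}_N} - S_N \;=\; \sum_{i=1}^N Z_i, \qquad Z_i := \frac{x_i}{p_i}\big(\epsilon_i - p_i\big),
\]
is a sum of $N$ independent, centered random variables. First I would record the two quantities that govern Bennett's bound. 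On the one hand, since $\epsilon_i - p_i$ takes its values in $\{-p_i,\,1-p_i\}$ and $p_i\in(0,1)$, we have $|Z_i|\le |x_i|/p_i \le b_N:=\max_{1\le j\le N}|x_j|/p_j$ almost surely. On the other hand, by independence, $\sum_{i=1}^N\mathrm{Var}(Z_i) = \sum_{i=1}^N \frac{x_i^2}{p_i^2}\,\mathrm{Var}(\epsilon_i) = \sum_{i=1}^N \frac{1-p_i}{p_i}x_i^2 =: v_N$, which is also the variance of $\widehat{S}_{\boldsymbol{p}_N}^{\boldsymbol{\epsilon}_N}$ computed earlier.

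Next I would invoke the standard Bennett inequality for a sum of independent centered variables bounded by $b_N$ with total variance $v_N$ (see e.g. \cite{BLM13}):
\[
\mathbb{P}\left\{\sum_{i=1}^N Z_i > t\right\} \;\le\; \exp\left(-\frac{v_N}{b_N^{2}}\,H\!\left(\frac{b_N\,t}{v_N}\right)\right),
\]
which, after substituting the expressions for $b_N$ and $v_N$ found above, is precisely \eqref{eq:Bennett}. For a self-contained argument one may reproduce the Cram\'er--Chernoff derivation: bounding the moments $\mathbb{E}[Z_i^k]\le b_N^{\,k-2}\,\mathrm{Var}(Z_i)$ for $k\ge2$ gives $\log\mathbb{E}[e^{\lambda Z_i}] \le \frac{\mathrm{Var}(Z_i)}{b_N^2}\big(e^{\lambda b_N}-1-\lambda b_N\big)$ for $\lambda>0$; summing over $i$, applying Markov's inequality to $\exp(\lambda(\sum_i Z_i - t))$, and minimizing over $\lambda>0$ (the optimum being $\lambda^\ast=b_N^{-1}\log(1+b_N t/v_N)$) yields exactly the displayed bound.

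Finally, \eqref{eq:Bern} follows from \eqref{eq:Bennett} together with the elementary inequality $H(u)\ge u^2/(2+2u/3)$, valid for every $u\ge0$, which gives $\frac{v_N}{b_N^2}H(b_N t/v_N) \ge t^2/(2v_N + 2b_N t/3)$. I do not expect a genuine obstacle here; the only point deserving care is the boundedness constant, where one should use $|\epsilon_i-p_i|\le\max(p_i,1-p_i)<1$ rather than a cruder estimate, so that the sharp scale $b_N=\max_i|x_i|/p_i$ — and not something larger — appears in the exponent. The ``almost surely'' in the statement merely records that the inequality holds for the fixed but arbitrary population values $x_1,\dots,x_N$.
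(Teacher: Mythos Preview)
Your proposal is correct and follows exactly the same route as the paper: the authors simply state that \eqref{eq:Bennett} and \eqref{eq:Bern} follow from the classical Bennett and Bernstein inequalities applied to the independent variables $(\epsilon_i/p_i)x_i$, which is precisely your centered $Z_i$'s argument. If anything, your write-up is more detailed than the paper's one-line justification.
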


Bounds \eqref{eq:Bennett}  and \eqref{eq:Bern}  straightforwardly result from
Bennett inequality \cite{Bennett} and Bernstein exponential inequality
\cite{Bernstein} respectively, when applied to the independent random
variables $(\epsilon_{i}/p_{i})x_{i}$, $1\leq i \leq N$. By applying these
results to the variables $-(\epsilon_{i}/p_{i})x_{i}$'s, the same bounds
naturally hold for the deviation probability $\mathbb{P}\{\widehat
{S}_{\boldsymbol{p} _{N}}^{\boldsymbol{\epsilon} _{N}}-S_{N} <-t \}$ (and,
incidentally, for $\mathbb{P}\{\vert\widehat{S}_{\boldsymbol{p} _{N}%
}^{\boldsymbol{\epsilon} _{N}}-S_{N} \vert>t \}$ up to a factor $2$).
Details, as well as extensions to other deviation inequalities (see
\textit{e.g.} \cite{FukNagaev}), are left to the reader.


\subsection{Exponential inequalities for sums of negatively associated random variables}

For clarity, we first recall the definition of \textit{negatively associated
random variables}, see \cite{JDP83}.

\begin{definition}
\label{def:negassoc} Let $Z_{1},\; \ldots,\; Z_{n}$ be random variables
defined on the same probability space, valued in a measurable space
$(E,\mathcal{E})$. They are said to be negatively associated iff for any pair
of disjoint subsets $A_{1}$ and $A_{2}$ of the index set $\{1,\; \ldots,\; n
\}$
\begin{equation}
\label{eq:neg}Cov \left(  f((Z_{i})_{i\in A_{1}}),\; g((Z_{j})_{j\in A_{2}})
\right)  \leq0,
\end{equation}
for any real valued measurable functions $f:E^{\#A_{1}}\rightarrow\mathbb{R}$
and $g:E^{\#A_{2}}\rightarrow\mathbb{R}$ that are both increasing in each variable.
\end{definition}

The following result provides tail bounds for sums of negatively associated
random variables, which extends the usual Bennett/Bernstein inequalities in the
i.i.d. setting, see \cite{Bennett} and \cite{Bernstein}.

\begin{theorem}
\label{thm:BernNeg} Let $Z_{1},\;\ldots,\;Z_{N}$ be square integrable
negatively associated real valued random variables such that $|Z_{i}|\leq c$
a.s. and $\mathbb{E}[Z_{i}]=0$ for $1\leq i\leq N$. Let $a_{1},\; \ldots,\; a_N$ be
non negative constants and set $\sigma^{2}=\frac{1}{N}\sum_{i=1}^{N}a_{i}%
^{2}Var(Z_{i})$. Then, for all $t>0$, we have: $\forall N\geq1$,
\begin{align}
\mathbb{P}\left\{  \sum_{i=1}^{N}a_{i}Z_{i}\geq t\right\}   &  \leq\exp\left(
-\frac{N\sigma^{2}}{c^{2}}H\left(  \frac{ct}{N\sigma^{2}}\right)  \right) \\
&  \leq\exp\left(  -\frac{t^{2}}{2N\sigma^{2}+\frac{2ct}{3}}\right)  .
\end{align}
\end{theorem}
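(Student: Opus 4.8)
The plan is to run the classical Cram\'er--Chernoff argument, the single place where negative association enters being a sub-multiplicativity bound on the Laplace transform that plays the role usually played by independence. First I would fix $\lambda>0$ and apply Markov's inequality to $\exp(\lambda\sum_{i=1}^{N}a_{i}Z_{i})$, getting
\begin{equation*}
\mathbb{P}\left\{ \sum_{i=1}^{N}a_{i}Z_{i}\geq t\right\} \leq e^{-\lambda t}\,\mathbb{E}\left[ \prod_{i=1}^{N}e^{\lambda a_{i}Z_{i}}\right] .
\end{equation*}

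The key step is then to show $\mathbb{E}[\prod_{i=1}^{N}e^{\lambda a_{i}Z_{i}}]\leq\prod_{i=1}^{N}\mathbb{E}[e^{\lambda a_{i}Z_{i}}]$. Since $\lambda\geq0$ and the $a_{i}$'s are nonnegative, each map $z\mapsto e^{\lambda a_{i}z}$ is nonnegative and nondecreasing, so $e^{\lambda a_{1}Z_{1}}$ is a nondecreasing function of $Z_{1}$ while $\prod_{i=2}^{N}e^{\lambda a_{i}Z_{i}}$ is a function of $(Z_{2},\dots,Z_{N})$ nondecreasing in each coordinate. Definition~\ref{def:negassoc}, applied with $A_{1}=\{1\}$ and $A_{2}=\{2,\dots,N\}$, yields $\mathrm{Cov}(e^{\lambda a_{1}Z_{1}},\,\prod_{i=2}^{N}e^{\lambda a_{i}Z_{i}})\leq0$, that is $\mathbb{E}[\prod_{i=1}^{N}e^{\lambda a_{i}Z_{i}}]\leq\mathbb{E}[e^{\lambda a_{1}Z_{1}}]\,\mathbb{E}[\prod_{i=2}^{N}e^{\lambda a_{i}Z_{i}}]$. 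Because any subfamily of negatively associated variables is again negatively associated, an induction on $N$ completes this factorisation.

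At that point I am reduced to the independent case and only need a per-term estimate. For a centred real random variable $W$ with $|W|\leq b$ a.s., the elementary convexity bound $e^{x}\leq1+x+(e^{b}-1-b)x^{2}/b^{2}$ on $[-b,b]$ together with $\mathbb{E}[W]=0$ gives $\mathbb{E}[e^{\lambda W}]\leq\exp\big((e^{\lambda b}-1-\lambda b)\,\mathrm{Var}(W)/b^{2}\big)$ (this is the Bennett moment lemma, see \cite{Bennett}). Applying it to $W=a_{i}Z_{i}$, with range $c$ and $\mathrm{Var}(a_{i}Z_{i})=a_{i}^{2}\mathrm{Var}(Z_{i})$, and multiplying over $i$, I obtain
\begin{equation*}
\mathbb{P}\left\{ \sum_{i=1}^{N}a_{i}Z_{i}\geq t\right\} \leq\exp\left( -\lambda t+\frac{N\sigma^{2}}{c^{2}}\big(e^{\lambda c}-1-\lambda c\big)\right) ,\qquad\lambda>0.
\end{equation*}

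Finally I would optimise over $\lambda$: the exponent is minimised at $\lambda^{\ast}=c^{-1}\log(1+ct/(N\sigma^{2}))$, and plugging this value in collapses the exponent to $-(N\sigma^{2}/c^{2})\,H(ct/(N\sigma^{2}))$ with $H(x)=(1+x)\log(1+x)-x$, which is the first claimed bound; the second, Bernstein-type bound then follows from the elementary inequality $H(x)\geq x^{2}/(2+2x/3)$ valid for $x\geq0$. The left-tail, and hence the two-sided, versions follow verbatim upon replacing $Z_{i}$ by $-Z_{i}$, which preserves negative association. I expect the factorisation step to be the only genuine obstacle: one must check that the monotonicity hypotheses of Definition~\ref{def:negassoc} are met at each stage of the induction, which is exactly why the sign condition $a_{i}\geq0$ is imposed --- with mixed signs some factors $e^{\lambda a_{i}Z_{i}}$ would be decreasing and the negative-association covariance inequality would no longer have the sign needed to close the argument.
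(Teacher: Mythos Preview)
Your argument follows exactly the paper's route: Chernoff's method, then peel off one factor at a time using the negative-association covariance inequality (this is precisely the paper's induction leading to \eqref{eq:neg2}), and finally the standard Bennett optimisation. The strategy and the level of detail are essentially identical; the paper simply writes ``just like in the proof of the classic Bennett/Bernstein inequalities'' where you spell out the per-term estimate and the optimal $\lambda^{\ast}$.

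There is one concrete slip worth flagging. You apply the Bennett moment lemma to $W=a_iZ_i$ ``with range $c$'', but the almost-sure bound on $a_iZ_i$ is $a_ic$, not $c$; the convexity inequality you quote is valid for $\lambda a_iZ_i$ only when $a_i\le 1$. Without that constraint the displayed exponent $-\lambda t+\tfrac{N\sigma^{2}}{c^{2}}(e^{\lambda c}-1-\lambda c)$ is not an upper bound on the log-Laplace, and indeed the stated conclusion can fail (take $N=1$, a centred two-point $Z_1$ with small variance and $c=1$, and $a_1$ large). The paper's proof does not actually avoid this issue---it just defers to the ``classic'' argument without writing the per-term bound---and in the paper's only application (Theorem~\ref{thm:neg}) the weights are absorbed so that effectively $a_i\equiv 1$. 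So your approach is the intended one; to make the step airtight either add the hypothesis $\max_i a_i\le 1$, replace $c$ by $c\max_i a_i$ in the final bounds, or absorb each $a_i$ into $Z_i$ at the outset.
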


Before detailing the proof, observe that the same bounds hold true for the
tail probability $\mathbb{P}\left\{ \sum_{i=1}^{N}a_{i}Z_{i}\leq-t\right\}  $
(and for $\mathbb{P}\left\{  |\sum_{i=1}^{N}a_{i}Z_{i}|\geq t\right\}  $ as
well, up to a multiplicative factor $2$). Refer also to Theorem 4 in
\cite{Janson} for a similar result in a more restrictive setting
(\textit{i.e.} for tail bounds related to sums of \textit{negatively related}
r.v.'s) and to \cite{Shao00} as well. \begin{proof}%

The proof starts off with the usual Chernoff method: for all $\lambda>0$,
\begin{equation}%
\label{eq:Chernoff}
\mathbb{P}\left\{\sum_{i=1}
^N a_i Z_i  \geq t\right\}\leq \exp\left( -t\lambda +\log \mathbb{E}%
\left[e^{t\sum_{i=1}^N a_i Z_i} \right] \right).
\end{equation}

Next, observe that, for all $t>0$, we have
\begin{eqnarray}\label{eq:neg2}
\mathbb{E}\left[\exp\left(t\sum_{i=1}^n a_i Z_i\right)\right]&=&\mathbb{E}
\left[\exp(t a_n Z_n )\exp\left(t\sum_{i=1}^{n-1}
a_i Z_i\right)\right]\nonumber\\
&\leq &\mathbb{E}
\left[ \exp(ta_n Z_n) \right]\mathbb{E}\left[\exp\left(t\sum_{i=1}^{n-1}
a_i Z_i\right)  \right]\nonumber\\
&\leq & \prod_{i=1}^n\mathbb{E}
\left[ \exp(ta_iZ_i) \right],\label{eq:neg2}
\end{eqnarray}

using the property \eqref{eq:neg}
combined with a descending recurrence on $i$. The proof is finished by plugging \eqref{eq:neg2}
into \eqref{eq:Chernoff}
and optimizing finally the resulting bound w.r.t. $\lambda>0$, just like in the proof of the classic Bennett/Bernstein inequalities, see \cite{Bennett}
and \cite{Bernstein}. $\square$
\end{proof} \medskip

The first assertion of the theorem stated below reveals that any rejective
scheme $\boldsymbol{\epsilon} ^{*}_{N}$ forms a collection of negatively
related r.v.'s, the second one appearing then as a direct consequence of Theorem \ref{thm:BernNeg}.
We underline that many sampling schemes (\textit{e.g.} Rao-Sampford sampling, Pareto sampling, Srinivasan sampling) of fixed size are actually described by random vectors $\boldsymbol{\epsilon}_N$ with negatively associated components, see \cite{BJ12} or \cite{KCR11}, so that exponential bounds similar to that stated below can be proved for such sampling plans.

\begin{theorem}
\label{thm:neg} Let $N\geq1$ and $\boldsymbol{\epsilon} ^{*}_{N}=(\epsilon
^{*}_{1},\; \ldots,\; \epsilon^{*}_{N})$ be the vector of indicator variables
related to a rejective plan on $\mathcal{I}_{N}$ with first order inclusion probabilities $(\pi_1,\; \ldots,\; \pi_N)\in ]0,1]^N$. Then, the following
assertions hold true.

\begin{itemize}
\item[(i)] The binary random variables $\epsilon^{*}_{1},\; \ldots,\;
\epsilon^{*}_{N}$ are negatively related.

\item[(ii)] For any $t\geq0$ and $N\geq1$, we have:
\begin{align*}
\mathbb{P}\left\{  \widehat{S}_{\boldsymbol{\pi} }^{\boldsymbol{\epsilon}
^{*}_{N}}-S_{N} \geq t \right\}   &  \leq2 \exp\left(  -\frac{\sum_{i=1}%
^{N}\frac{1-\pi_{i}}{\pi_{i}}x_{i}^{2}}{\left(  \max_{1\leq i \leq
N}\frac{x_{i}}{\pi_{i}} \right)  ^{2}} H\left(  \frac{\max_{1\leq i \leq
N}\frac{\vert x_{i}\vert}{\pi_{i}}t/2}{\sum_{i=1}^{N}\frac{1-\pi_{i}}{\pi_{i}%
}x_{i}^{2}} \right)  \right) \\
&  \leq2 \exp\left(  \frac{-t^{2}/4}{\frac{2}{3}\max_{1\leq i\leq N}\frac{\vert
x_{i}\vert}{\pi_{i}}t+ 2\sum_{i=1}^{N}\frac{1-\pi_{i}}{\pi_{i}}x_{i}^{2}}
\right)  .
\end{align*}
\end{itemize}
\end{theorem}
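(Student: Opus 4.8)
The plan is to derive assertion (i) from the representation of rejective sampling as conditional Poisson sampling, and then to obtain assertion (ii) from (i) together with Theorem~\ref{thm:BernNeg}, after a preliminary reduction to nonnegative coefficients.

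\emph{Assertion (i).} Fix the canonical parameter vector $\mathbf{p}_{N}$, so that the law $R_{N}$ of $\boldsymbol{\epsilon}^{*}_{N}$ coincides with the law of the Poisson vector $\boldsymbol{\epsilon}_{N}$ --- whose components are independent, $\epsilon_{i}\sim Ber(p_{i})$ --- conditioned upon $\{\sum_{i=1}^{N}\epsilon_{i}=n\}$. Independent random variables are trivially negatively associated; and although negative association is \emph{not} stable under conditioning in general, conditioning independent random variables upon the value of their sum does preserve it whenever each summand has a log-concave probability mass function, which is the case of the Bernoulli law. Hence the components of $\boldsymbol{\epsilon}^{*}_{N}$ are negatively associated, in particular negatively related; alternatively, conditional Poisson sampling defines a strongly Rayleigh measure and such measures are negatively associated (see \cite{BJ12}). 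For later use, I would record two stability properties: any sub-family of a negatively associated family is negatively associated, and applying to each variable of a negatively associated family a coordinatewise transformation that is \emph{uniformly} monotone (all non-decreasing, or all non-increasing) yields again a negatively associated family --- in the non-increasing case because negating both test functions in \eqref{eq:neg} leaves the covariance unchanged while turning them into non-decreasing functions.

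\emph{Assertion (ii).} Since $\widehat{S}_{\boldsymbol{\pi}}^{\boldsymbol{\epsilon}^{*}_{N}}-S_{N}=\sum_{i=1}^{N}(x_{i}/\pi_{i})(\epsilon^{*}_{i}-\pi_{i})$, the first step is to split the index set according to the sign of $x_{i}$: with $G_{+}=\{i:\,x_{i}\geq 0\}$, $G_{-}=\{i:\,x_{i}<0\}$, put $U_{i}=(x_{i}/\pi_{i})(\epsilon^{*}_{i}-\pi_{i})$ on $G_{+}$ and $U_{i}=(|x_{i}|/\pi_{i})(\pi_{i}-\epsilon^{*}_{i})$ on $G_{-}$, so that $\widehat{S}_{\boldsymbol{\pi}}^{\boldsymbol{\epsilon}^{*}_{N}}-S_{N}=\sum_{i\in G_{+}}U_{i}+\sum_{i\in G_{-}}U_{i}$. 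In both cases $\mathbb{E}[U_{i}]=0$, $Var(U_{i})=((1-\pi_{i})/\pi_{i})x_{i}^{2}$ and $|U_{i}|\leq |x_{i}|/\pi_{i}\leq c:=\max_{1\leq j\leq N}|x_{j}|/\pi_{j}$; and $(U_{i})_{i\in G_{+}}$ (resp.\ $(U_{i})_{i\in G_{-}}$) is obtained from the negatively associated family $(\epsilon^{*}_{i})_{i\in G_{+}}$ (resp.\ $(\epsilon^{*}_{i})_{i\in G_{-}}$) by non-decreasing (resp.\ non-increasing) coordinatewise maps, hence is itself negatively associated. Since $\{\widehat{S}_{\boldsymbol{\pi}}^{\boldsymbol{\epsilon}^{*}_{N}}-S_{N}\geq t\}\subseteq\{\sum_{i\in G_{+}}U_{i}\geq t/2\}\cup\{\sum_{i\in G_{-}}U_{i}\geq t/2\}$, a union bound together with two applications of Theorem~\ref{thm:BernNeg} (with unit weights, constant $c$ and threshold $t/2$) gives
\[
\mathbb{P}\left\{\widehat{S}_{\boldsymbol{\pi}}^{\boldsymbol{\epsilon}^{*}_{N}}-S_{N}\geq t\right\}\leq\exp\!\left(-\frac{V_{+}}{c^{2}}H\!\left(\frac{ct/2}{V_{+}}\right)\right)+\exp\!\left(-\frac{V_{-}}{c^{2}}H\!\left(\frac{ct/2}{V_{-}}\right)\right),\quad V_{\pm}=\sum_{i\in G_{\pm}}\frac{1-\pi_{i}}{\pi_{i}}x_{i}^{2}.
\]

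To finish, I would observe that $v\mapsto (v/c^{2})H(c\tau/v)=(\tau/c)\cdot H(c\tau/v)/(c\tau/v)$ is non-increasing in $v>0$, because $u\mapsto H(u)/u$ is non-decreasing; since $V_{\pm}\leq V:=\sum_{i=1}^{N}((1-\pi_{i})/\pi_{i})x_{i}^{2}$, each exponential above is at most $\exp(-(V/c^{2})H(ct/(2V)))$, which is the first inequality of (ii). The Bernstein-type bound then follows from $H(u)\geq u^{2}/(2(1+u/3))$ (enlarging, if needed, the linear term in the denominator to match the stated constant), just as \eqref{eq:Bern} follows from \eqref{eq:Bennett}. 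I expect the only genuinely delicate step to be assertion (i): "closedness of negative association under conditioning" is false in general, so the argument really rests on the log-concavity of the Bernoulli mass functions --- equivalently, on the strong Rayleigh property of conditional Poisson sampling. Everything in (ii) is routine bookkeeping; the points needing some care are the sign-splitting (source of the factor $2$ and the halved threshold in the stated bounds) and the monotonicity of the Bennett function $H$ used to pass from the partial variances $V_{\pm}$ to the total variance $V$.
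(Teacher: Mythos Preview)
Your proof is correct and follows essentially the same route as the paper: assertion~(i) via the conditional-Poisson representation (the paper simply cites Theorem~2.8 in \cite{JDP83}, which is exactly the ``conditioning independent variables on their sum'' result you invoke through log-concavity/strong Rayleigh), and assertion~(ii) via a positive/negative sign split, a union bound, and two applications of Theorem~\ref{thm:BernNeg}. The paper phrases the split using $x_i^{+}$ and $x_i^{-}$ rather than your index sets $G_{\pm}$, but the two are equivalent. You are in fact more careful than the paper on one point: the paper says only that ``a direct application of Theorem~\ref{thm:BernNeg} \ldots\ straightforwardly proves Assertion~(ii)'', tacitly passing from the partial variances $V_{\pm}$ to the full variance $V$, whereas you make explicit the needed monotonicity of $v\mapsto (v/c^{2})H(c\tau/v)$ (equivalently, that $u\mapsto H(u)/u$ is non-decreasing). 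This is the right justification and is worth keeping in a written-up version.
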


\begin{proof}%

Considering the usual representation of the distribution of $(\epsilon_1,\; \ldots,\; \epsilon_N)$ as the conditional distribution of a sample of independent Bernoulli variables $(\epsilon^*_1,\; \ldots,\; \epsilon^*_N)$ conditioned upon the event $\sum_{i=1}%
^N\epsilon^*_i=n$ (see subsection \ref{subsec:Poisson}%
), Assertion $(i)$ is a straightforward consequence from Theorem 2.8 in \cite{JDP83}
(see also \cite{Barbour}%
).
\medskip
Assertion $(i)$ shows in particular that Theorem \ref{thm:BernNeg}
can be applied to the random variables $\{ (\epsilon_i^*/\pi_i-1)x_i^+:\; 1\leq i \leq N \}$ and to the random variables $\{ (\epsilon_i^*/\pi_i-1)x_i^-:\; 1\leq i \leq N \}$ as well. Using the union bound, we obtain that
\begin{multline*}
\mathbb{P}\left\{ \widehat{S}_{\boldsymbol{\pi}}^{\boldsymbol{\epsilon}%
^*_N}-S_N \geq t \right\}\leq \mathbb{P}\left\{ \sum_{i=1}%
^N\left( \frac{\epsilon^*_i}{\pi_i}%
-1 \right)x^+_i\geq t/2 \right\} \\+ \mathbb{P}\left\{ \sum_{i=1}%
^N\left( \frac{\epsilon^*_i}{\pi_i}%
-1 \right)x^-_i\leq -t/2 \right\},
\end{multline*}
and a direct application of Theorem \ref{thm:BernNeg} to each of the terms involved in this bound straightforwardly proves Assertion $(ii)$. $\square$
\end{proof}
 \medskip

The negative association property permits to handle the dependence of the
terms involved in the summation. However, it may lead to rather loose
probability bounds. Indeed, except the factor $2$, the bounds of Assertion
$(ii)$ exactly correspond to those stated in Theorem \ref{thm:poisson}, as if
the $\epsilon_{i}^{*}$'s were independent, whereas the asymptotic variance $\sigma^2_N$ of
$\widehat{S}_{\boldsymbol{\pi} }^{\boldsymbol{\epsilon} _{N}^{*}}$ can be much smaller than $\sum_{i=1}^{N}(1-\pi_{i})x_{i}^{2}/\pi_{i}$.
It is the goal of the subsequent analysis to improve these preliminary results and establish exponential bounds involving the asymptotic variance $\sigma^2_N$.
\begin{remark}{\sc (SWOR)} We point out that in the specific case of sampling without replacement, \textit{i.e.} when $\pi_i=n/N$ for all $i\in \{1,\; \ldots,\; N \}$, the inequality stated in Assertion $(ii)$ is quite comparable (except the factor $2$) to that which can be derived from the Chernoff bound given in \cite{Hoeffding63}, see Proposition 2 in \cite{BardenetMaillard}.

\end{remark}


\section{Main Results - Exponential Inequalities for Rejective Sampling}

\label{sec:main} The main results of the paper are stated and discussed in the
present section. More accurate deviation probabilities related to the total estimate
\eqref{eq:HT}  based on a rejective sampling scheme $\boldsymbol{\epsilon}
_{N}^{\ast}$ of (fixed) sample size $n\leq N$ with first order inclusion
probabilities $\boldsymbol{\pi} _{N}=(\pi_{1},\;\ldots,\;\pi_{N})$ and
canonical representation $\mathbf{p}_{N}=(p_{1},\;\ldots,\;p_{N})$ are now
investigated. Consider $\boldsymbol{\epsilon} _{N}$ a Poisson scheme with
$\mathbf{p}_{N}$ as vector of first order inclusion probabilities. As
previously recalled, the distribution of $\boldsymbol{\epsilon} _{N}^{\ast}$
is equal to the conditional distribution of $\boldsymbol{\epsilon} _{N}$ given
$\sum_{i=1}^{N}\varepsilon_{i}=n$:
\begin{equation}
(\varepsilon_{1}^{\ast},\varepsilon_{2}^{\ast},....,\varepsilon_{N}^{\ast
})\overset{d}{=}(\varepsilon_{1},....,\varepsilon_{N})\ |\sum_{i=1}%
^{N}\varepsilon_{i}=n.\label{eq:distribution}%
\end{equation}
Hence, we almost-surely have: $\forall t>0$, $\forall N\geq1$,
\begin{equation}
\mathbb{P}\left\{  \widehat{S}_{\boldsymbol{\pi} _{N}}%
^{\boldsymbol{\epsilon} _{N}^{\ast}}-S_{N}>t\right\}  =\mathbb{P}\left\{  \sum_{i=1}^{N}\frac{\epsilon_{i}}{\pi_{i}}x_{i}-S_{N}>t\mid
\sum_{i=1}^{N}\epsilon_{i}=n\right\}  .\label{eq:cond_tail}%
\end{equation}
As a first go, we shall prove tail bounds for the quantity
\begin{equation}
\widehat{S}_{\boldsymbol{p} _{N}}^{\boldsymbol{\epsilon} _{N}^{\ast}}%
\overset{def}{=}\sum_{i=1}^{N}\frac{\epsilon_{i}^{\ast}}{p_{i}}x_{i}%
.\label{eq:HT2}%
\end{equation}
Observe that this corresponds to the HT estimate of the total $\sum_{i=1}%
^{N}\frac{p_{i}}{\pi_{i}}x_{i}$. Refinements of relationships \eqref{eq:rel1}  and
\eqref{eq:rel2}  between the $p_{i}$'s and the $\pi_{i}$'s shall next allow us
to obtain an upper bound for \eqref{eq:cond_tail}. Notice incidentally that,
though slightly biased (see Assertion $(i)$ of Theorem \ref{thm:final}), the statistic
\eqref{eq:HT2}  is commonly used as an estimator of $S_{N}$, insofar as the
parameters $p_{i}$'s are readily available from the canonical representation
of $\boldsymbol{\epsilon} _{N}^{\ast}$, whereas the computation of the
$\pi_{i}$'s is much more complicated. One may refer to \cite{CDL94} for
practical algorithms dedicated to this task. Hence, Theorem \ref{thm:rejective} is of practical interest to build non asymptotic confidence intervals for the total $S_N$.
\medskip

\noindent {\bf Asymptotic variance.} Recall that $d_{N}=\sum_{i=1}^{N}p_{i}(1-p_{i})$ is the variance
$Var(\sum_{i=1}^{N}\epsilon_{i})$ of the size of the Poisson plan
$\boldsymbol{\epsilon} _{N}$ and set
\[
\theta_{N}=\frac{\sum_{i=1}^{N}x_{i}(1-p_{i})}{d_{N}}.
\]
As explained in \cite{BCC2013}, the quantity $\theta_{N}$ is the coefficient of the linear regression relating $\sum_{i=1}^{N}\frac{\epsilon_{i}}{p_{i}}x_{i}-S_{N}$ to the sample
size $\sum_{i=1}^{N}\epsilon_{i}$. We may thus write
\[
\sum_{i=1}^{N}\frac{\epsilon_{i}}{p_{i}}x_{i}-S_{N}=\theta_{N}\times \sum_{i=1}%
^{N}\epsilon_{i}+r_{N},
\]
where the residual $r_{N}$\ is orthogonal to $\sum
_{i=1}^{N}\epsilon_{i}$. Hence, we have the following decomposition
\begin{equation}\label{eq:Poisson_var}
Var\left(  \sum_{i=1}^{N}\frac{\epsilon_{i}}{p_{i}}x_{i}\right)  =\sigma^2_{N}+\theta_{N}^{2}d_{N},
\end{equation}
where 
\begin{equation}\label{eq:Asympt_Var}
\sigma^2_{N}=Var\left(  \sum_{i=1}^{N}(\epsilon_{i}-p_{i})\left(
\frac{x_{i}}{p_{i}}-\theta_{N}\right)  \right)  
\end{equation}
 is the asymptotic variance
of the statistic $\widehat{S}_{\boldsymbol{p} _{N}}^{\boldsymbol{\epsilon} _{N}^{\ast}}$, see \cite{Hajek64}. In other words,
the variance reduction resulting from the use of a rejective sampling plan instead of a Poisson plan is
equal to $\theta_{N}^{2}d_{N}$, and can be very large in practice. A sharp Bernstein type probability inequality for $\widehat{S}_{\boldsymbol{p} _{N}}^{\boldsymbol{\epsilon} _{N}^{\ast}}$ should thus involve $\sigma^2_N$ rather than the Poisson variance $Var(  \sum_{i=1}^{N}(\epsilon_{i}/p_{i})x_{i})$.
Using the fact that $\sum_{i=1}^{N}(\epsilon_{i}-p_{i})=0$ on the event
$\{\sum_{i=1}^{N}\epsilon_{i}=n\}$, we may now write:
\begin{align}
\mathbb{P}\left\{  \widehat{S}_{\boldsymbol{p} _{N}}^{\boldsymbol{\epsilon}
_{N}^{\ast}}-S_{N}>t\right\}   &  =\mathbb{P}\left\{  \sum_{i=1}%
^{N}\frac{\epsilon_{i}}{p_{i}}x_{i}-S_{N}>t\mid\sum_{i=1}^{N}\epsilon
_{i}=n\right\}  \nonumber\label{eq:ratio}\\
&  =\frac{\mathbb{P}\left\{  \sum_{i=1}^{N}(\epsilon_{i}-p_{i})\frac{x_{i}%
}{p_{i}}>t,\;\sum_{i=1}^{N}\epsilon_{i}=n\right\}  }{\mathbb{P}\left\{
\sum_{i=1}^{N}\epsilon_{i}=n\right\}  }\nonumber\\
&  =\frac{\mathbb{P}\left\{  \sum_{i=1}^{N}(\epsilon_{i}-p_{i})\left(
\frac{x_{i}}{p_{i}}-\theta_{N}\right)  >t,\sum_{i=1}^{N}\epsilon
_{i}=n\right\}  }{\mathbb{P}\left\{  \sum_{i=1}^{N}\epsilon_{i}=n\right\}  }.
\end{align}
Based on the
observation that the random variables $\sum_{i=1}^{N}(\epsilon_{i}%
-p_{i})(x_{i}/p_{i}-\theta_{N})$ and $\sum_{i=1}^{N}(\epsilon_{i}-p_{i})$ are
uncorrelated, Eq. \eqref{eq:ratio} thus permits to establish directly the CLT $\sigma_N^{-1}(\widehat{S}_{\boldsymbol{p} _{N}}^{\boldsymbol{\epsilon} _{N}^{\ast}}-S_{N})\Rightarrow \mathcal{N}(0,1)$,
provided that $d_{N}\rightarrow+\infty$, as $N\rightarrow+\infty$, symplifying
asymptotically the ratio, see \cite{Hajek64}. Hence, the asymptotic variance of $\widehat{S}_{\boldsymbol{p} _{N}}^{\boldsymbol{\epsilon} _{N}^{\ast}}-S_{N}$ is the variance $\sigma^2_N$ of the quantity $\sum_{i=1}^{N}(\epsilon_{i}%
-p_{i})(x_{i}/p_{i}-\theta_{N})$, which is less than that of the Poisson HT estimate $\eqref{eq:Poisson_var}$, since it eliminates the variability due to the sample size. We also point out that Lemma \ref{lem:bias} proved in the Appendix section straightforwardly shows that the "variance term" $\sum_{i=1}^Nx_i^2(1-\pi_i)/\pi_i$ involved in the bound stated in Theorem \ref{thm:BernNeg} is always larger than $(1+6/d_N)^{-1}\sum_{i=1}^Nx_i^2(1-p_i)/p_i$.
\medskip

The desired result here is non
asymptotic and accurate exponential bounds are required for both the numerator
and the denominator of \eqref{eq:ratio}. It is proved in \cite{Hajek64} (see
Lemma 3.1 therein) that, as $N\rightarrow+\infty$:
\begin{equation}
\mathbb{P}\left\{  \sum_{i=1}^{N}\epsilon_{i}=n\right\}  =(2\,\pi
\,d_{N})^{-1/2}\,(1+o(1)).\label{local}%
\end{equation}
As shall be seen in the proof of the theorem stated below, the approximation
\eqref{local}  can be refined by using a local Berry-Essen bound or the
results in \cite{Deheuvels} and we thus essentially need to establish an
exponential bound for the numerator with a constant of order $d_{N}^{-1/2}$,
sharp enough so as to simplify the resulting ratio bound and cancel off the
denominator. We shall prove that this can be achieved by using a similar
argument as that considered in \cite{BERCLEM2010} for establishing an accurate
exponential bound for i.i.d. $1$-lattice random vectors, based on a device
introduced in \cite{Talagrand95} for refining Hoeffding's inequality.

\begin{theorem}
\label{thm:rejective}Let $N\geq1$. Suppose that $\boldsymbol{\epsilon}
_{N}^{\ast}$ is a rejective scheme of size $n\leq N$ with canonical parameter
$\boldsymbol{p} _{N}=(p_{1},\;\ldots,\;p_{N})\in]0,\;1[^{N}$. Then, there
exist universal constants $C$ and $D$ such that we have for all $t>0$ and
for all $N\geq1$,
\begin{align*}
\mathbb{P}\left\{  \widehat{S}_{\boldsymbol{p} _{N}}^{\boldsymbol{\epsilon}
_{N}^{\ast}}-S_{N}>t\right\}   &  \leq C\exp\left(  -\frac{\sigma^2_{N}%
}{\left(  \max_{1\leq j\leq N}\frac{|x_{j}|}{p_{j}}\right)  ^{2}}H\left(
\frac{t\max_{1\leq j\leq N}\frac{|x_{j}|}{p_{j}}}%
{\sigma^2_{N}}\right)  \right)  \\
&  \leq C\exp\left(  -\frac{t^{2}}{2\left(  \sigma^2_{N}+\frac{1}%
{3}t\max_{1\leq j\leq N}\frac{|x_{j}|}{p_{j}}\right)
}\right)  ,
\end{align*}
as soon as $\min\{d_{N},\;d_{N}^{\ast}\}\geq1$ and $d_N\geq D$.
\end{theorem}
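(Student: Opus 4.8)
\medskip
\noindent The plan is to work from the ratio representation \eqref{eq:ratio}. Let $\epsilon_1,\ldots,\epsilon_N$ be independent, $\epsilon_i\sim Ber(p_i)$, and write $N_0=\sum_{i=1}^N\epsilon_i$ (so $\mathbb{E}[N_0]=\sum_{i=1}^Np_i=n$, $Var(N_0)=d_N$), $a_i=x_i/p_i-\theta_N$ and $W_N=\sum_{i=1}^Na_i(\epsilon_i-p_i)$; by \eqref{eq:distribution} and \eqref{eq:ratio},
\[
\mathbb{P}\left\{\widehat{S}_{\boldsymbol{p}_N}^{\boldsymbol{\epsilon}_N^{\ast}}-S_N>t\right\}=\frac{\mathbb{P}\left\{W_N>t,\ N_0=n\right\}}{\mathbb{P}\left\{N_0=n\right\}},
\]
and one bounds numerator and denominator so that their respective $d_N^{-1/2}$ local factors cancel. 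By the definition of $\theta_N$, $W_N$ and $N_0-n$ are uncorrelated, $Var(W_N)=\sigma^2_N$, and, $\theta_N$ being a convex combination of the $x_j/p_j$, the summands of $W_N$ are bounded by a quantity of the order of $M:=\max_{1\le j\le N}|x_j|/p_j$ (equal to $M$ when the $x_j$ are of constant sign). For the denominator, $n=\mathbb{E}[N_0]$ sits at the mode of the integer-valued sum $N_0$, and the local Berry-Esseen expansion of \cite{Deheuvels} for sums of non-identically distributed Bernoulli variables refines \eqref{local} into $\mathbb{P}\{N_0=n\}\ge(2\pi d_N)^{-1/2}(1-\kappa d_N^{-1/2})\ge c_1 d_N^{-1/2}$ for a universal $c_1>0$, as soon as $d_N\ge D$.

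\medskip
\noindent For the numerator I would use the exponential change of measure (Escher transformation) of \cite{Talagrand95}, in the form adapted in \cite{BERCLEM2010} to ``$1$-lattice'' random vectors, applied to $(W_N,N_0)$ at the point $(t,n)$. For $\lambda\ge0$, $\mu\in\mathbb{R}$, let $Q_{\lambda,\mu}$ be the law under which the $\epsilon_i$ stay independent Bernoulli with tilted parameters $q_i=p_ie^{\lambda a_i+\mu}/(p_ie^{\lambda a_i+\mu}+1-p_i)$, i.e.\ $dQ_{\lambda,\mu}/d\mathbb{P}=\exp(\lambda W_N+\mu(N_0-n)-\Psi(\lambda,\mu))$ with $\Psi(\lambda,\mu)=\log\mathbb{E}[\exp(\lambda W_N+\mu(N_0-n))]$, and for each $\lambda$ pick $\mu=\mu(\lambda)$ so that $\sum_{i=1}^Nq_i=n$ (possible and unique by strict monotonicity of $\mu\mapsto\sum_iq_i$; the case $n=N$ is trivial). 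Reverting the change of measure on $\{W_N>t,\,N_0=n\}$, on which $e^{-\mu(N_0-n)}=1$ and $e^{-\lambda W_N}\le e^{-\lambda t}$, gives
\[
\mathbb{P}\left\{W_N>t,\ N_0=n\right\}\le e^{-\lambda t+g(\lambda)}\,Q_{\lambda,\mu(\lambda)}\{N_0=n\},\qquad g(\lambda):=\min_{\mu}\Psi(\lambda,\mu).
\]
As a partial minimum of the jointly convex $\Psi$, the map $g$ is convex with $g(0)=g'(0)=0$ (since $\mathbb{E}[W_N]=0$ and $\mathbb{E}[N_0-n]=0$), and $g(\lambda)\le\Psi(\lambda,0)=\log\mathbb{E}[e^{\lambda W_N}]\le(\sigma^2_N/M^2)(e^{\lambda M}-\lambda M-1)$ by the standard cumulant estimate underlying Bennett's inequality for the sum $W_N$ of independent centred variables with envelope $M$ and total variance $\sigma^2_N$. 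Moreover, under $Q_{\lambda,\mu(\lambda)}$ the integer sum $N_0$ has mean exactly $n$, so the same local Berry-Esseen bound yields $Q_{\lambda,\mu(\lambda)}\{N_0=n\}\le c_2 (d_N^{(\lambda)})^{-1/2}$ with $d_N^{(\lambda)}=\sum_{i=1}^Nq_i(1-q_i)$.

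\medskip
\noindent It then remains to verify that $d_N^{(\lambda)}\ge c_3 d_N$ over the range of $\lambda$ relevant below (for $t$ so close to the finite maximal value of $W_N$ that the relevant $\lambda$ would leave this range, the crude bound $\mathbb{P}\{W_N>t,N_0=n\}\le\mathbb{P}\{W_N>t\}$ and a lower estimate of the rate function of $W_N$ already absorb the factor $d_N^{1/2}$), so that the $d_N^{1/2}$ of numerator and denominator cancel up to a universal $C:=c_2c_3^{-1/2}/c_1$; and to optimize by taking $\lambda=\lambda^{\ast}:=M^{-1}\log(1+tM/\sigma^2_N)$, the maximizer of $\lambda t-(\sigma^2_N/M^2)(e^{\lambda M}-\lambda M-1)$, which via $g(\lambda^{\ast})\le(\sigma^2_N/M^2)(e^{\lambda^{\ast}M}-\lambda^{\ast}M-1)$ gives
\[
-\lambda^{\ast}t+g(\lambda^{\ast})\ \le\ -\frac{\sigma^2_N}{M^2}\,H\!\left(\frac{tM}{\sigma^2_N}\right),
\]
i.e.\ exactly the exponent of the first inequality in the statement; the elementary $H(u)\ge u^2/(2(1+u/3))$ then yields the Bernstein form. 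The hypotheses $d_N\ge D$ and $\min\{d_N,d_N^{\ast}\}\ge1$ place us in the non-degenerate regime where the above local expansions hold with universal constants and where the H\'ajek relations \eqref{eq:rel1} and \eqref{eq:rel2} are available; replacing each $x_i$ by $-x_i$ gives the lower-tail, hence the two-sided, inequality.

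\medskip
\noindent I expect the main obstacle to be the uniform comparison $d_N^{(\lambda)}\ge c_3 d_N$ between the tilted sample-size variance and $d_N$: this is what matches the local $d_N^{-1/2}$ factor produced by the lattice component $N_0$ of the tilted numerator to the one lower-bounding the denominator, uniformly in $t$ and in the population data $(x_i,p_i)_{1\le i\le N}$. Equivalently, one has to control the interaction between the exponential decay along the continuous direction $W_N$ and the local (lattice) factor along $N_0$ as $t$ runs over the admissible interval up to the maximal value of $W_N$; this interaction is precisely what the $1$-lattice refinement of \cite{Talagrand95} in \cite{BERCLEM2010} is built to handle, and transferring that analysis to the present setting — where independence holds only after conditioning on the sample size — is where the real work lies.
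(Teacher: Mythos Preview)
Your proposal is essentially the paper's own argument: the ratio \eqref{eq:ratio}, a local Berry--Esseen lower bound from \cite{Deheuvels} for the denominator (the paper's Lemma~\ref{lem:denominator}), a two-dimensional Escher tilt on $(W_N,N_0)$ followed by the reduction $g(\lambda)\le\Psi(\lambda,0)$ to the one-dimensional Bennett cumulant bound for the exponential factor (Lemma~\ref{lem:factor1}), and a second application of the local Berry--Esseen bound under the tilted law for the lattice factor $\{N_0=n\}$ (Lemma~\ref{lem:factor2}). The only difference is cosmetic: the paper packages the tilt as a single $u^{\ast}\in\mathbb{R}^2$ determined by $\psi_N^{(1)}(u^{\ast})=(x,0)$, which is exactly your pair $(\lambda,\mu(\lambda))$ chosen so that $N_0$ is recentred at $n$ under $Q_{\lambda,\mu(\lambda)}$. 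As for the obstacle you isolate, the paper dispatches it in one line at the end of the proof of Lemma~\ref{lem:factor2}, arguing that $d_{N,u^{\ast}}\ge d_N$ (your $c_3=1$) directly from $H(u^{\ast})\le 1$.
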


An overestimated value of the constant $C$ can be deduced by a careful
examination of the proof given below. Before we detail it, we point out that
the exponential bound in Theorem \ref{thm:rejective} involves the asymptotic variance of \eqref{eq:HT2}, in contrast to bounds
obtained by exploiting the \textit{negative association} property of the
$\epsilon_{i}^{\ast}$'s.

\begin{remark}{\sc (SWOR (bis))} We underline that, in the particular case of sampling without replacement (\textit{i.e.} when $p_i=\pi_i=n/N$ for $1\leq i\leq N$),  the Bernstein type exponential inequality stated above provides a control of the tail similar to that obtained in \cite{BardenetMaillard}, see Theorem 2 therein, with $k=n$. In this specific situation, we have $d_N=n(1-n/N)$ and $\theta_N=S_N/n$, so that the formula \eqref{eq:Asympt_Var} then becomes $$
\sigma_N^2=\left(1-\frac{n}{N}\right)\frac{N^2}{n}\left\{ \frac{1}{N}\sum_{i=1}^Nx_i^2 -\left(\frac{1}{N}\sum_{i=1}^N x_i\right)^2 \right\}.
$$
The control induced by Theorem \ref{thm:rejective} is actually slightly better than that given by Theorem 2 in \cite{BardenetMaillard}, insofar as the factor $(1-n/N)$ is involved in the variance term, rather than $(1-(n-1)/N)$, that is crucial when considering situations where $n$ gets close to $N$ (see the discussion preceded by Proposition 2 in \cite{BardenetMaillard}).
\end{remark}

\begin{proof}
We first introduce additional notations.
Set $Z_{i}%
=(\epsilon _{i}-p_{i})(x_{i}/p_{i}-\theta _{N})$ and $m_{i}=\epsilon _{i}%
-p_{i}%
$ for $1\leq i\leq N$ and, for convenience, consider the standardized variables given by
\begin{equation*}
\mathcal{Z}_{N}=n^{1/2}\frac{1}{N}\sum_{1\leq i\leq N}Z_{i} \text{ and }
\mathcal{M}_{N}=d_N^{-1/2}\sum_{1\leq i\leq N}m_{i}.
\end{equation*}%
As previously announced, the proof is based on Eq. \eqref{eq:ratio}. The lemma below first provides a sharp lower bound for the denominator, $ \mathbb{P}%
^*\left\{ \mathcal{M}_{N}%
=0\right\}$ with the notations above. As shown in the proof given in the Appendix section, it can be obtained by applying the local Berry-Esseen bound established in \cite{Deheuvels}
for sums of independent (and possibly non identically) Bernoulli random variables.
\begin{lemma}
\label{lem:denominator}
Suppose that Theorem \ref{thm:rejective}%
's assumptions are fulfilled. Then, there exist universal constants $C_1$ and $D$ such that: $\forall N\geq 1$,
\begin{equation}
\mathbb{P}\{ \mathcal{M}_N=0 \}\geq C_1\frac{1}{\sqrt{d_N}},
\end{equation}
provided that $d_N\geq D$.
\end{lemma}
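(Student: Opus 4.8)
The plan is to observe first that, since the sample size $n$ is an integer and the canonical parameter satisfies $\sum_{i=1}^{N}p_i=n$, the event $\{\mathcal{M}_N=0\}$ coincides exactly with $\{\sum_{i=1}^{N}\epsilon_i=n\}$: writing $S=\sum_{i=1}^{N}\epsilon_i$, this is the event that the sum of the independent Bernoulli$(p_i)$ variables equals its (integer-valued) mean $\mathbb{E}[S]=n$, with $Var(S)=d_N$. Since each $\epsilon_i$ takes the values $0$ and $1$ with positive probability (recall $p_i\in\,]0,1[$), the variable $S$ is supported on $\{0,\ldots,N\}$ with lattice span exactly $1$, so that a lower bound of the announced order $d_N^{-1/2}$ on $\mathbb{P}\{S=n\}$ is precisely what a local limit theorem with explicit remainder term provides.

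Next I would control the Lyapunov ratio governing that remainder term. For a Bernoulli$(p_i)$ variable one has $\mathbb{E}|\epsilon_i-p_i|^3=p_i(1-p_i)\{(1-p_i)^2+p_i^2\}\leq p_i(1-p_i)$, hence $\sum_{i=1}^{N}\mathbb{E}|\epsilon_i-p_i|^3\leq d_N$ and the normalized quantity satisfies $L_N:=d_N^{-3/2}\sum_{i=1}^{N}\mathbb{E}|\epsilon_i-p_i|^3\leq d_N^{-1/2}$. Feeding this into the local Berry--Esseen bound of \cite{Deheuvels} for sums of independent, possibly non identically distributed, Bernoulli random variables, whose remainder is of the order of $L_N$, yields a universal constant $C'$ such that
\[
\left|\,\sqrt{d_N}\,\mathbb{P}\{S=k\}-\varphi\!\left(\tfrac{k-n}{\sqrt{d_N}}\right)\right|\leq C'\,L_N\leq\frac{C'}{\sqrt{d_N}}
\]
for every integer $k$, where $\varphi(u)=(2\pi)^{-1/2}e^{-u^2/2}$ denotes the standard normal density.

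Finally, specializing to $k=n$ and using $\varphi(0)=(2\pi)^{-1/2}$ gives
\[
\sqrt{d_N}\,\mathbb{P}\{\mathcal{M}_N=0\}=\sqrt{d_N}\,\mathbb{P}\{S=n\}\geq\frac{1}{\sqrt{2\pi}}-\frac{C'}{\sqrt{d_N}},
\]
so that, taking $D:=(2\sqrt{2\pi}\,C')^{2}$, one obtains $\sqrt{d_N}\,\mathbb{P}\{\mathcal{M}_N=0\}\geq 1/(2\sqrt{2\pi})$ as soon as $d_N\geq D$, which is exactly the claim with $C_1=1/(2\sqrt{2\pi})$. The only genuinely delicate point is to make sure the remainder in the local CLT is of strictly smaller order than the leading term $\varphi(0)/\sqrt{d_N}$ \emph{uniformly over the configuration} $\mathbf{p}_N$, rather than merely asymptotically as in H\'ajek's statement \eqref{local} — and this is precisely what the elementary bound $L_N\leq d_N^{-1/2}$ secures; everything else is bookkeeping.
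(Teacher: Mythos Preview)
Your proof is correct and follows essentially the same route as the paper: both obtain the lower bound from a local limit result of \cite{Deheuvels} for sums of independent, non-identically distributed Bernoulli variables, the key point being that the remainder is $O(d_N^{-1})$ and hence negligible against the main term $\varphi(0)/\sqrt{d_N}$. The only cosmetic difference is that you invoke the point-mass formulation of the local CLT directly, whereas the paper quotes the CDF-level Edgeworth expansion (Theorem~1.3 of \cite{Deheuvels}) at the half-integer abscissae $\pm 1/(2\sqrt{d_N})$ and subtracts---the first-order correction terms cancel by parity of $\phi$, and what remains is exactly your estimate.
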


The second lemma gives an accurate upper bound for the numerator. Its proof can be found in the Appendix section.
\begin{lemma}%
\label{lem:numerator}
Suppose that Theorem \ref{thm:rejective}%
's assumptions are fulfilled. Then, we have for all $x\geq 0$, and for all $N\geq 1$ such that $\min\{d_N,\; d_N^*\}\geq 1$:
\begin{multline*}
\mathbb{P}\left\{\mathcal{Z}_{N}\geq  x,\mathcal{M}_{N}%
=0\right\}\leq C \frac{1}{\sqrt{d_N}}%
\times \\\exp\left( -\frac{Var\left( \sum_{i=1}^NZ_i \right)}%
{\left(\max_{1\leq j\leq N}\frac{\vert x_j\vert}{p_j} \right)^2}%
h\left( \frac{N}{\sqrt{n}}\frac{x\max_{1\leq j\leq N}\frac{\vert x_j\vert}%
{p_j}}{Var\left( \sum_{i=1}^NZ_i \right)}
\right) \right)\\
\leq  C_2 \frac{1}{\sqrt{d_N}}\exp\left( -\frac{N^2x^2/n}%
{2\left(Var\left( \sum_{i=1}^NZ_i \right)+\frac{1}{3}\frac{N}{\sqrt{n}%
}x\max_{1\leq j\leq N}\frac{\vert x_j\vert}{p_j} \right)}
\right),
\end{multline*}%
where $C_2<+\infty$ is a universal constant.
\end{lemma}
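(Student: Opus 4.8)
The plan is to bound the joint probability by an Escher (exponential change of measure) argument applied to the independent Bernoulli coordinates of the underlying Poisson vector $\boldsymbol{\epsilon}_N$, in the spirit of the device of \cite{Talagrand95} and \cite{BERCLEM2010}. Writing $T=\sum_{i=1}^N Z_i$ and $U=\sum_{i=1}^N m_i=\sum_{i=1}^N(\epsilon_i-p_i)$, the event to control is $\{\mathcal{Z}_N\geq x,\ \mathcal{M}_N=0\}=\{T\geq s,\ U=0\}$, with threshold $s=xN/\sqrt n$ and $\{U=0\}=\{\sum_i\epsilon_i=n\}$. The target is exactly the Bennett tail estimate for $T$ at level $s$, with variance $Var(T)=\sigma_N^2$ and range $b=\max_{j}|x_j|/p_j$, multiplied by the local factor $1/\sqrt{d_N}$ produced by the lattice constraint $U=0$.

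First I would run the Chernoff step: for every $\lambda\geq 0$,
$$\mathbb{P}\{T\geq s,\ U=0\}\leq e^{-\lambda s}\,\mathbb{E}\big[e^{\lambda T}\I\{U=0\}\big].$$
The key observation is that $e^{\mu U}\equiv 1$ on $\{U=0\}$, so I may insert a second, cost-free tilting parameter $\mu\in\mathbb{R}$:
$$\mathbb{E}\big[e^{\lambda T}\I\{U=0\}\big]=\mathbb{E}\big[e^{\lambda T+\mu U}\I\{U=0\}\big]=e^{\Lambda(\lambda,\mu)}\,\widetilde{\mathbb{P}}_{\lambda,\mu}\{U=0\},$$
where $\Lambda(\lambda,\mu)=\log\mathbb{E}[e^{\lambda T+\mu U}]=\sum_{i=1}^N\log\mathbb{E}[e^{c_i m_i}]$ with $c_i=\lambda(x_i/p_i-\theta_N)+\mu$, and $\widetilde{\mathbb{P}}_{\lambda,\mu}$ is the tilted law, under which the $\epsilon_i$ remain independent Bernoulli with parameters $\tilde p_i=p_ie^{c_i}/(1-p_i+p_ie^{c_i})$. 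I would then fix $\mu=\mu^\ast(\lambda)$ by the centering equation $\partial_\mu\Lambda=\sum_i(\tilde p_i-p_i)=0$, i.e. $\sum_i\tilde p_i=n$, so that $U=0$ becomes the mean of the tilted lattice sum $\sum_i\epsilon_i$.

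With this choice the residual factor is governed by a local limit estimate: since $\sum_i\epsilon_i$ is a sum of non-identical independent Bernoulli variables now centred at the integer $n$, the local Berry--Esseen bound of \cite{Deheuvels} (exactly as in Lemma \ref{lem:denominator}) yields $\widetilde{\mathbb{P}}_{\lambda,\mu^\ast}\{U=0\}\leq C/\sqrt{\widetilde d_N}$ with $\widetilde d_N=\sum_i\tilde p_i(1-\tilde p_i)$. It remains to optimise $e^{-\lambda s+\Lambda(\lambda,\mu^\ast(\lambda))}$ over $\lambda\geq 0$. Here I would use that $T$ and $U$ are uncorrelated by the very construction of $\theta_N$: this forces $\partial_\lambda\partial_\mu\Lambda|_{0}=Cov(T,U)=0$, so $\mu^\ast(\lambda)$ is of second order in $\lambda$ and $\Lambda(\lambda,\mu^\ast(\lambda))$ differs from $\Lambda_T(\lambda)=\Lambda(\lambda,0)$, the cumulant generating function of the bounded, independent, mean-zero summands $Z_i$, only by higher-order terms. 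The classical optimisation of $-\lambda s+\Lambda_T(\lambda)$ then returns the Bennett rate $-\,Var(T)\,b^{-2}\,h\big(sb/Var(T)\big)$ with $h(u)=(1+u)\log(1+u)-u$, which is precisely the leading exponential factor in the statement.

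The hard part will be making the two comparisons uniform with universal constants. First, I must control the tilted variance from below, $\widetilde d_N\geq c\,d_N$, over the whole range of $\lambda$ used in the optimisation, so that $1/\sqrt{\widetilde d_N}\leq C_2/\sqrt{d_N}$; this is where the hypotheses $\min\{d_N,d_N^\ast\}\geq 1$ and $d_N\geq D$ enter, guaranteeing that the tilted parameters $\tilde p_i$ do not collapse towards $0$ or $1$ in aggregate and that the local Berry--Esseen estimate applies with a fixed constant. Second, I must check that the centering correction $\Lambda(\lambda,\mu^\ast(\lambda))-\Lambda_T(\lambda)$, controlled through the orthogonality $Cov(T,U)=0$ together with the boundedness of $Z_i$ and $m_i$, does not erode the Bennett exponent and only inflates the multiplicative constant $C$. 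Once both estimates are in hand the first displayed inequality follows, and the second (Bernstein) form is deduced from the elementary bound $h(u)\geq u^2/\big(2(1+u/3)\big)$, exactly as in the passage from \eqref{eq:Bennett} to \eqref{eq:Bern} in Theorem \ref{thm:poisson}.
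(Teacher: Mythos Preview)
Your outline is the paper's argument: exponential change of measure followed by a local Berry--Esseen estimate for the lattice event $\{U=0\}$. The one substantive difference is how you handle the second tilting coordinate. You note (correctly) that $e^{\mu U}\equiv 1$ on $\{U=0\}$, hence the product $e^{\Lambda(\lambda,\mu)}\,\widetilde{\mathbb P}_{\lambda,\mu}\{U=0\}$ does not depend on $\mu$; you then choose $\mu=\mu^\ast(\lambda)$ so as to recentre $U$, and this is what creates the two ``hard parts'' you flag (comparing $\widetilde d_N$ to $d_N$, and controlling the centering correction $\Lambda(\lambda,\mu^\ast(\lambda))-\Lambda_T(\lambda)$). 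The paper instead fixes $u^\ast=(u_1^\ast,0)$, i.e.\ your $\mu=0$. With that choice $\Lambda(\lambda,0)=\Lambda_T(\lambda)$ is exactly the cumulant generating function of $\sum_i Z_i$, so the optimisation in $\lambda$ returns the Bennett exponent immediately (this is Lemma~\ref{lem:factor1}), and your second hard part simply does not arise. All the remaining work is concentrated in bounding the single tilted local probability $\mathbb P_{u^\ast,N}\{\mathcal M_N=0\}\leq C'/\sqrt{d_N}$ (Lemma~\ref{lem:factor2}), which the paper handles via the local Berry--Esseen bound of \cite{Deheuvels} together with a comparison $d_{N,u^\ast}\geq d_N$ obtained from $H(u^\ast)\leq 1$.

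In short, the $\mu$-invariance you yourself pointed out dissolves your second difficulty: take $\mu=0$ and the Bennett factor separates cleanly, leaving only the tilted local probability to control. Your route with $\mu=\mu^\ast(\lambda)$ is not wrong, but it trades an easy local estimate (at the tilted mean) for a harder exponential comparison that the orthogonality $Cov(T,U)=0$ controls only to leading order in $\lambda$; the paper's bookkeeping avoids that trade entirely.
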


The bound stated in Theorem \ref{thm:rejective}
now directly results from Eq. \eqref{eq:ratio}
combined with Lemmas \ref{lem:denominator} and \ref{lem:numerator}, with $x=t\frac{\sqrt{n}}{N}$. $\square$
\end{proof} 

\bigskip

Even if the computation of the biased statistic \eqref{eq:HT2} is much more tractable from a practical perspective, we now come back to the study of the HT total estimate \eqref{eq:HT}. The first part of the result stated below provides an estimation of the bias that replacement of \eqref{eq:HT} by \eqref{eq:HT2} induces, whereas its second part finally gives a tail bound for $\eqref{eq:HT}$.

\begin{theorem}\label{thm:final} Suppose that the assumptions of Theorem \ref{thm:rejective} are fulfilled and set $M_N=(6/d_N)\sum_{i=1}^N\vert x_i\vert /\pi_i$. The following assertions hold true.
\begin{itemize}
\item[(i)] For all $N\geq 1$, we almost-surely have:
\begin{equation*}
\left\vert \widehat{S}^{\boldsymbol{\epsilon}_N^*}_{\boldsymbol{\pi}_N } - \widehat{S}^{\boldsymbol{\epsilon}_N^*}_{\mathbf{p}_N } \right\vert \leq M_N .
\end{equation*}
\item[(ii)] There exist universal
constants $C$ and $D$ such that, for all $t>M_{N}$ and for all $N\geq1$, we have:
\begin{multline*}
\mathbb{P}\left\{  \widehat{S}^{\boldsymbol{\epsilon}^*_N}_{\boldsymbol{\pi}_N}-S_{N}>t \right\}   
\leq \\ C\exp\left(  -\frac{\sigma_{N}^2}{\left(  \max_{1\leq j\leq
N}\frac{|x_{j}|}{p_{j}}\right)  ^{2}}H\left(  \frac{N}{\sqrt{n}}%
\frac{(t-M_{N})\max_{1\leq j\leq N}\frac{|x_{j}|}{p_{j}}}{\sigma^2_{N}%
}\right)  \right)  \\
  \leq  C\exp\left(  -\frac{N^{2}(t-M_{N})^{2}/n}{2\left(  \sigma^2_{N}+\frac{1}{3}\frac{N}{\sqrt{n}}(t-M_{N})\max_{1\leq j\leq N}\frac{|x_{j}%
|}{p_{j}}\right)  }\right)  ,
\end{multline*}
as soon as $\min\{d_{N},\;d_{N}^{\ast}\}\geq1$ and $d_N\geq D$.
\end{itemize}
\end{theorem}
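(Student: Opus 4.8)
The plan is to prove the two assertions separately, $(i)$ being a purely deterministic estimate and $(ii)$ a short consequence of it combined with Theorem~\ref{thm:rejective}. For $(i)$, I would start from the identity
\[
\widehat{S}^{\boldsymbol{\epsilon}_N^{*}}_{\boldsymbol{\pi}_N} - \widehat{S}^{\boldsymbol{\epsilon}_N^{*}}_{\mathbf{p}_N} \;=\; \sum_{i=1}^N \epsilon_i^{*}\, x_i\Big(\frac{1}{\pi_i}-\frac{1}{p_i}\Big) \;=\; \sum_{i=1}^N \epsilon_i^{*}\,\frac{x_i}{\pi_i}\cdot\frac{p_i-\pi_i}{p_i}.
\]
Since $\epsilon_i^{*}\in\{0,1\}$, the left-hand side is at most $\sum_{i=1}^N (|x_i|/\pi_i)\,|p_i-\pi_i|/p_i$ in absolute value, so the claim reduces to the per-unit bias control $|p_i-\pi_i|\le (6/d_N)\,p_i$, equivalently $|1-\pi_i/p_i|\le 6/d_N$, for every $i\in\{1,\dots,N\}$. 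This is precisely (a reformulation of) Lemma~\ref{lem:bias}: it is the non-asymptotic, fully quantified version of Hájek's relations \eqref{eq:rel1}--\eqref{eq:rel2}, keeping in mind that a rejective plan satisfies $\sum_i \pi_i=\sum_i p_i=n$. Inserting this pointwise bound produces exactly $M_N=(6/d_N)\sum_i |x_i|/\pi_i$; as the estimate holds for every realization of $\boldsymbol{\epsilon}_N^{*}$, it is in particular true $\mathbb{P}$-almost surely.

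For $(ii)$, assertion $(i)$ yields the $\mathbb{P}$-a.s. inequality
\[
\widehat{S}^{\boldsymbol{\epsilon}_N^{*}}_{\boldsymbol{\pi}_N} - S_N \;\le\; \big(\widehat{S}^{\boldsymbol{\epsilon}_N^{*}}_{\mathbf{p}_N} - S_N\big) + M_N,
\]
hence the event inclusion $\{\widehat{S}^{\boldsymbol{\epsilon}_N^{*}}_{\boldsymbol{\pi}_N}-S_N>t\}\subseteq\{\widehat{S}^{\boldsymbol{\epsilon}_N^{*}}_{\mathbf{p}_N}-S_N>t-M_N\}$ and therefore $\mathbb{P}\{\widehat{S}^{\boldsymbol{\epsilon}_N^{*}}_{\boldsymbol{\pi}_N}-S_N>t\}\le \mathbb{P}\{\widehat{S}^{\boldsymbol{\epsilon}_N^{*}}_{\mathbf{p}_N}-S_N>t-M_N\}$. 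Because $t>M_N$ the threshold $t-M_N$ is strictly positive, and the standing hypotheses $\min\{d_N,d_N^{*}\}\ge1$ and $d_N\ge D$ are in force, so I would simply invoke Theorem~\ref{thm:rejective} with $t$ replaced by $t-M_N$ --- equivalently, rerun its proof, i.e. the ratio identity \eqref{eq:ratio} together with Lemmas~\ref{lem:denominator} and~\ref{lem:numerator}, at the threshold $t-M_N$. This delivers the announced Bennett- and Bernstein-type bounds with the same universal constants $C$ and $D$.

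The only genuinely non-routine ingredient is the per-unit bias bound behind $(i)$: controlling $|p_i-\pi_i|$ by an explicit multiple of $p_i/d_N$ amounts to replacing the $o(1/d_N)$ remainders in \eqref{eq:rel1}--\eqref{eq:rel2} by a concrete constant, which is exactly the substance of Lemma~\ref{lem:bias} and is best discharged once and for all in the Appendix. Once that is granted, $(i)$ is a one-line summation estimate and $(ii)$ requires no new probabilistic input beyond Theorem~\ref{thm:rejective} evaluated at the shifted threshold, so I expect no further obstacle.
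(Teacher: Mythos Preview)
Your proposal is correct and follows essentially the same route as the paper: part $(i)$ is obtained from Lemma~\ref{lem:bias} by a one-line summation, and part $(ii)$ is the event inclusion you describe followed by Theorem~\ref{thm:rejective} applied at the shifted threshold $t-M_N$. One small remark: Lemma~\ref{lem:bias} actually states the slightly sharper bound $\lvert 1/\pi_i-1/p_i\rvert\le (6/d_N)(1-\pi_i)/\pi_i$, which implies (but is not literally a reformulation of) your per-unit estimate $\lvert p_i-\pi_i\rvert\le (6/d_N)p_i$; either form suffices here since the factor $1-\pi_i$ is dropped in the definition of $M_N$ anyway.
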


The proof is given in the Appendix section. We point out that, for nearly uniform weights, \textit{i.e.} when $c_1n/N\leq\pi_i\leq c_2n/N$ for all $i\in\{1,\; \ldots,\; N  \}$ with $0<c_1\leq c_2<+\infty$, if there exists $K<+\infty$ such that $\max_{1\leq i\leq N}\vert x_i\vert \leq K$ for all $N\geq 1$, then the bias term $M_N$ is of order $o(N)$, provided that $\sqrt{N}/n\rightarrow 0$ as $N\rightarrow +\infty$.

\section{Extensions to more general sampling schemes}\label{sec:extension}

We finally explain how the results established in the previous section for rejective sampling may permit to control tail probabilities for more general sampling plans. A similar argument is used in \cite{Ber98} to derive CLT's for HT estimators based on complex sampling schemes that can be approximated by more simple sampling plans, see also \cite{BCC2013}. 
Let $\widetilde{R}_{N}$ and $R_{N}$ be two sampling plans on the population $\mathcal{I}_N$ and consider the
\emph{total variation metric}
\[
\Vert\widetilde{R}_{N}-R_{N}\Vert_{1}\overset{def}{=}\sum_{s\in \mathcal{P}(\mathcal{I}_{N})}\left|
\widetilde{R}_{N}(s)-R_{N}(s)\right|  ,
\]
as well as the \emph{Kullback-Leibler divergence}
\[
D_{KL}(R_{N}\vert\vert \widetilde{R}_{N})\overset{def}{=}\sum_{s\in \mathcal{P}(\mathcal{I}_{N})}R_{N}%
(s)\,\log\left(  \frac{R_{N}(s)}{\widetilde{R}_{N}(s)}\right)  .
\]
Equipped with these notations, we can state the following result.
\begin{lemma} \label{lem:ext} Let $\boldsymbol{\epsilon}_N$ and $\widetilde{\boldsymbol{\epsilon}}_N$ be two schemes defined on the same probability space and drawn from plans $R_N$ and $\widetilde{R}_N$ respectively and let $\mathbf{p}_N\in ]0,1]^N$. Then, we have: $\forall N\geq 1$, $\forall t\in \mathbb{R}$,
\begin{eqnarray*}
 \left|  \mathbb{P}\left\{  \widehat{S}_{\mathbf{p} _{N}%
}^{\boldsymbol{\epsilon} _{N}}-S_{N}>t\right\}  -\mathbb{P}\left\{  \widehat{S}_{\mathbf{p} _{N}}%
^{\widetilde{\boldsymbol{\epsilon}}_{N}}-S_{N}>t\right\}  \right| 
& \leq &\Vert\widetilde{R}_{N}-R_{N}\Vert_{1} \\ &\leq&\sqrt{2 D_{KL}(R_{N}\vert\vert\widetilde
{R}_{N})}.
\end{eqnarray*}
\end{lemma}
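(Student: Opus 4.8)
The plan is to prove the two inequalities separately, the first by a direct coupling/union-type argument and the second by Pinsker's inequality. Both events $\{\widehat{S}_{\mathbf{p}_N}^{\boldsymbol{\epsilon}_N}-S_N>t\}$ and $\{\widehat{S}_{\mathbf{p}_N}^{\widetilde{\boldsymbol{\epsilon}}_N}-S_N>t\}$ are determined by the sample alone: there is a subset $\mathcal{A}_t\subseteq\mathcal{P}(\mathcal{I}_N)$, namely $\mathcal{A}_t=\{s\in\mathcal{P}(\mathcal{I}_N):\ \sum_{i\in s}x_i/p_i-S_N>t\}$, such that $\mathbb{P}\{\widehat{S}_{\mathbf{p}_N}^{\boldsymbol{\epsilon}_N}-S_N>t\}=R_N(\mathcal{A}_t)$ and $\mathbb{P}\{\widehat{S}_{\mathbf{p}_N}^{\widetilde{\boldsymbol{\epsilon}}_N}-S_N>t\}=\widetilde{R}_N(\mathcal{A}_t)$, because the statistic $\widehat{S}_{\mathbf{p}_N}^{\boldsymbol{\epsilon}_N}$ depends on $\boldsymbol{\epsilon}_N$ only through which units are selected, and likewise for $\widetilde{\boldsymbol{\epsilon}}_N$. (The fact that the two schemes live on a common probability space is not even needed for the first bound, only the marginal laws $R_N$, $\widetilde R_N$ matter; it is, however, the natural setting in which such coupling arguments are phrased.)

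For the first inequality I would then simply write
\begin{equation*}
\left|R_N(\mathcal{A}_t)-\widetilde{R}_N(\mathcal{A}_t)\right|=\left|\sum_{s\in\mathcal{A}_t}\bigl(R_N(s)-\widetilde{R}_N(s)\bigr)\right|\leq\sum_{s\in\mathcal{A}_t}\left|R_N(s)-\widetilde{R}_N(s)\right|\leq\sum_{s\in\mathcal{P}(\mathcal{I}_N)}\left|R_N(s)-\widetilde{R}_N(s)\right|=\Vert\widetilde{R}_N-R_N\Vert_1,
\end{equation*}
which is just the standard fact that the total variation distance dominates the difference of probabilities assigned to any fixed measurable set. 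This step is routine; the only point worth stating carefully is the reduction of the deviation events to events in $\mathcal{P}(\mathcal{I}_N)$ described above.

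The second inequality, $\Vert\widetilde{R}_N-R_N\Vert_1\leq\sqrt{2\,D_{KL}(R_N\vert\vert\widetilde{R}_N)}$, is exactly Pinsker's inequality applied to the two discrete probability measures $R_N$ and $\widetilde{R}_N$ on the finite set $\mathcal{P}(\mathcal{I}_N)$; I would invoke it directly (with a reference, e.g. to \cite{BLM13}), possibly after noting that it is harmless to assume $R_N$ is absolutely continuous with respect to $\widetilde{R}_N$ since otherwise $D_{KL}(R_N\vert\vert\widetilde{R}_N)=+\infty$ and there is nothing to prove. There is no real obstacle here: the entire content of the lemma is the elementary observation that a deviation probability for the Horvitz--Thompson statistic is the mass that the sampling plan puts on a fixed family of samples, so that closeness of the plans in total variation (hence, via Pinsker, in Kullback--Leibler divergence) transfers directly to closeness of the tail probabilities. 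The only mild subtlety to flag is making sure the normalizing weights used in both Horvitz--Thompson statistics are the same vector $\mathbf{p}_N$ — which is precisely why the statement fixes a common $\mathbf{p}_N\in]0,1]^N$ rather than using the (different) first-order inclusion probabilities of $R_N$ and $\widetilde{R}_N$.
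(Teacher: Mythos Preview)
Your proof is correct and follows essentially the same approach as the paper: you identify the deviation event with a subset $\mathcal{A}_t\subseteq\mathcal{P}(\mathcal{I}_N)$, bound the difference of its two probabilities by the $\ell_1$ distance between the plans, and then invoke Pinsker's inequality. The paper's argument is identical in substance, only phrased via the indicator $\mathbb{I}\{\sum_{i\in s}x_i/p_i-S_N>t\}$ rather than the set $\mathcal{A}_t$.
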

\begin{proof} The first bound immediately results from the following elementary observation:
\begin{multline*}
 \mathbb{P}\left\{  \widehat{S}_{\mathbf{p} _{N}%
}^{\boldsymbol{\epsilon} _{N}}-S_{N}>t\right\}  -\mathbb{P}\left\{  \widehat{S}_{\mathbf{p} _{N}}%
^{\widetilde{\boldsymbol{\epsilon}}_{N}}-S_{N}>t\right\} =\\
\sum_{s\in \mathcal{P}(\mathcal{I}_N)}\mathbb{I}\{\sum_{i\in s}x_i/p_i-S_N  >t\}\times \left(R_N(s) -\widetilde{R}_N(s) \right),
\end{multline*}
while the second bound is the classical Pinsker inequality.
$\square$
\end{proof}
\medskip

In practice, $R_{N}$ is typically the rejective sampling plan
investigated in the previous subsection (or eventually the Poisson sampling scheme) and $\widetilde{R}_N$ a sampling plan from which the Kullback-Leibler divergence to $R_N$ asymptotically vanishes, \textit{e.g.} the rate at which $D_{KL}(R_{N}\vert\vert\widetilde{R}_{N})$ decays to zero has been investigated in \cite{Ber98} when $\widetilde{R}_N$ corresponds to Rao-Sampford, successive sampling or Pareto sampling
under appropriate regular conditions (see also \cite{BTL06}). Lemma \ref{lem:ext} combined with Theorem \ref{thm:rejective} or Theorem \ref{thm:final} permits then to obtain upper bounds for the tail probabilities $\mathbb{P}\{  \widehat{S}_{\mathbf{p} _{N}}%
^{\widetilde{\boldsymbol{\epsilon}}_{N}}-S_{N}>t\} $.

\section{Conclusion}\label{sec:concl}
In this article, we proved Bernstein-type tail bounds to quantify the deviation between a total and its Horvitz-Thompson estimator when based on conditional Poisson sampling, extending (and even slightly improving) results proved in the case of basic sampling without replacement. The original proof technique used to establish these inequalities relies on expressing the deviation probablities related to a conditional Poisson scheme as conditional probabilities related to a Poisson plan. This permits to recover tight exponential bounds, involving the asymptotic variance of the Horvitz-Thompson estimator. Beyond the fact that rejective sampling is of prime importance in the practice of survey sampling, extension of these tail bounds to sampling schemes that can be accurately approximated by rejective sampling in the total variation sense is also discussed.

\section*{Appendix - Technical Proofs}

\subsection*{Proof of Lemma \ref{lem:denominator}}

For clarity, we first recall the following result.

\begin{theorem}
\label{thm:denominator}(\cite{Deheuvels}, Theorem 1.3) Let $(Y_{j,n})_{1\leq
j\leq n}$ be a triangular array of independent Bernoulli random variables with
means $q_{1,n},\; \ldots,\; q_{n,n}$ in $(0,1)$ respectively. Denote by
$\sigma^{2}_{n}=\sum_{i=1}^{n}q_{i,n}(1-q_{i,n})$ the variance of the sum
$\Sigma_{n}=\sum_{i=1}^{n}Y_{i,n}$ and by $\nu_{n}=\sum_{i=1}^{n}q_{i,n}$ its
mean. Considering the cumulative distribution function (cdf) $F_{n}%
(x)=\mathbb{P}\{ \sigma_{n}^{-1}(\Sigma_{n}-\nu_{n} )\leq x \}$, we have:
$\forall n\geq1$,
\[
\sup_{k\in\mathbb{Z}}\left\vert F_{n}(x_{n,k})-\Phi(x_{n,k})-\frac{1-x_{n,k}%
^{2}}{6\sigma_{n}}\phi(x_{n,k})\left\{  1-\frac{2\sum_{i=1}^{n}q^{2}%
_{i,n}(1-q_{i,n})}{\sigma_{n}^{2}} \right\}  \right\vert \leq\frac{C}%
{\sigma^{2}_{n}},
\]
where $x_{n,k}=\sigma_{n}^{-1}(k-\nu_{n}+1/2)$ for any $k\in\mathbb{Z}$,
$\Phi(x)=(2\pi)^{-1/2}\int_{-\infty}^{x}\exp(-z^{2}/2)dz$ is the cdf of the
standard normal distribution $\mathcal{N}(0,1)$, $\phi(x)=\Phi^{\prime}(x)$
and $C<+\infty$ is a universal constant.
\end{theorem}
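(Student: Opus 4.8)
I would establish this local Edgeworth expansion by the classical Fourier-inversion route, exploiting that the $Y_{j,n}$'s are supported on the integer lattice with span $1$ and that the points $x_{n,k}$ are exactly the midpoints between consecutive atoms of $\Sigma_n$. Write $\varphi_n(t)=\mathbb{E}[e^{it\Sigma_n}]=\prod_{j=1}^n(1-q_{j,n}+q_{j,n}e^{it})$, which is analytic in $\{|t|<\pi\}$ uniformly in the $q_{j,n}$'s (its only complex zero lies at distance $\ge\pi$ from the origin). Since $\Sigma_n$ is supported on $\{0,1,\dots,n\}$, summing the inversion identity $\mathbb{P}\{\Sigma_n=m\}=(2\pi)^{-1}\int_{-\pi}^{\pi}e^{-itm}\varphi_n(t)\,dt$ over $m\le k$ and using $\sum_{m=0}^{k}e^{-itm}=(1-e^{-it(k+1)})/(1-e^{-it})$ gives, after multiplying top and bottom by $e^{it/2}$,
\[
F_n(x_{n,k})=\mathbb{P}\{\Sigma_n\le k\}=\frac{1}{2\pi}\int_{-\pi}^{\pi}\frac{e^{it/2}-e^{-it(k+1/2)}}{2i\sin(t/2)}\,\varphi_n(t)\,dt .
\]
The numerator vanishes at $t=0$, so the integrand is regular there; this kernel is the lattice analogue of $(1-e^{-ity})/(it)$, and because $k+1/2=\sigma_n x_{n,k}+\nu_n$ is a half-integer it is precisely the device that builds in the continuity correction and (as will be seen) annihilates the leading ``sawtooth'' term. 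The whole task is to evaluate this integral up to $O(1/\sigma_n^2)$, uniformly in $k$.

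Several regimes should be set aside first. If $\sigma_n^2$ is below an absolute constant the bound $C/\sigma_n^2$ is $\ge 1$ while the left-hand side is $\le 1+|1-x^2|\phi(x)/(6\sigma_n)=O(1/\sigma_n^2)$, so nothing is to prove; likewise for $k<0$ or $k\ge n$ one has $F_n(x_{n,k})\in\{0,1\}$ and $|x_{n,k}|$ comparable to $\sigma_n$ (using $\nu_n\ge\sigma_n^2$ and $n-\nu_n\ge\sigma_n^2$), so the normal terms are exponentially small. One may thus assume $\sigma_n$ large. Next, split the integral at $|t|=\delta$ for a small fixed $\delta$. On $\delta\le|t|\le\pi$ the elementary bound $|1-q+qe^{it}|^2=1-4q(1-q)\sin^2(t/2)\le e^{-4q(1-q)\sin^2(t/2)}$ yields $|\varphi_n(t)|\le e^{-2\sigma_n^2\sin^2(t/2)}\le e^{-(2/\pi^2)\sigma_n^2\delta^2}$, and since the kernel is $O(1)$ there this tail is negligible.

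On $|t|\le\delta$ I would use the cumulant expansion $\log(1-q+qe^{it})=iqt-\tfrac12 q(1-q)t^2-\tfrac{i}{6}q(1-q)(1-2q)t^3+O(q(1-q)|t|^4)$, the remainder being controlled through the bound $|\kappa_m(\mathrm{Ber}(q))|\le q(1-q)$ valid for every $m\ge 2$ (read off from the recursion $g^{(m)}=g(1-g)P_{m-1}(g)$ for the logistic function $g$, with $\|P_{m-1}\|_{\infty,[0,1]}\le 1$); this is the point that keeps every error term proportional to $\sigma_n^2$, with no spurious dependence on $\nu_n$ or on how close the $q_{j,n}$'s are to $0$ or $1$. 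Summing over $j$ gives $\varphi_n(t)=\exp(i\nu_n t-\tfrac12\sigma_n^2 t^2+\psi_n(t))$ with $\psi_n(t)=-\tfrac{i}{6}\kappa_{3,n}t^3+R_n(t)$, $|R_n(t)|\le C\sigma_n^2|t|^4$, and $\kappa_{3,n}=\sum_j q_{j,n}(1-q_{j,n})(1-2q_{j,n})=\sigma_n^2-2\sum_j q_{j,n}^2(1-q_{j,n})$. Then substitute $t=s/\sigma_n$ (so that $e^{i\nu_n t}$ combines with $e^{-it(k+1/2)}$ to produce $e^{-isx_{n,k}}$), expand $e^{\psi_n(s/\sigma_n)}=1-\tfrac{i\kappa_{3,n}}{6\sigma_n^3}s^3+O((|s|^3+s^6)/\sigma_n^2)$ and the kernel $\tfrac{e^{it/2}}{2i\sin(t/2)}=\tfrac{1}{it}+\tfrac12+O(t)$, extend the $s$-integration to $\mathbb{R}$ at exponential cost, and integrate term by term against $e^{-s^2/2}$. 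The leading term reproduces $\Phi(x_{n,k})$ (the boundary constants $\Phi(\nu_n/\sigma_n)-1$ and $\tfrac12\prod_j(1-q_{j,n})$ that appear along the way being exponentially small because $\nu_n/\sigma_n\ge\sigma_n$), the $1/\sigma_n$ contributions of $\kappa_{3,n}$ and of the ``$\tfrac12$'' in the kernel assemble, via $\tfrac{1}{2\pi}\int s^2 e^{-s^2/2-isx}\,ds=(1-x^2)\phi(x)$, into exactly $\dfrac{1-x_{n,k}^2}{6\sigma_n}\phi(x_{n,k})\bigl\{1-2\textstyle\sum_j q_{j,n}^2(1-q_{j,n})/\sigma_n^2\bigr\}$, and everything else is $O(1/\sigma_n^2)$.

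The main obstacle is precisely this last bookkeeping, in two respects. First, one must check that no $O(1/\sigma_n)$ term survives besides the advertised Edgeworth term; this is where the half-integer choice $k+1/2$ enters, forcing the sawtooth $\bar{B}_1(y)=y-\lfloor y\rfloor-\tfrac12$ to vanish at $y=\sigma_n x_{n,k}+\nu_n$, so that the $O(1/\sigma_n)$ oscillatory correction present in a general lattice expansion is absent here (and the second-order periodic pieces it would spawn are merely bounded, hence absorbed into $C/\sigma_n^2$). Second, one must bound the genuinely second-order pieces --- the square of the cubic term, the quartic remainder $R_n$, and the $O(t)$ tail of the kernel --- by $C/\sigma_n^2$ with $C$ independent of $n$ and of the inclusion probabilities; this rests entirely on the uniform cumulant bounds $|\kappa_{3,n}|,|\kappa_{4,n}|\le\sigma_n^2$ and on the uniform analyticity of $\varphi_n$. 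An economical alternative would be to quote the general lattice Edgeworth expansion of Esseen/Petrov and merely specialise it to Bernoulli summands and to the midpoints $x_{n,k}$, but carrying out the inversion directly, as above, makes both the vanishing of the sawtooth term and the precise form of the correction coefficient transparent.
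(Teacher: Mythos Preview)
The paper does not prove this statement at all: it is quoted verbatim from \cite{Deheuvels} (Theorem~1.3 there) and used as a black box in the proof of Lemma~\ref{lem:denominator}. There is therefore nothing to compare your argument against in the paper itself.

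That said, your sketch is the standard Fourier-inversion route to lattice Edgeworth expansions and is essentially how such results (including the one in \cite{Deheuvels}) are established. The two points you single out are exactly the right ones: the half-integer choice $k+\tfrac12$ forces the first periodic Bernoulli term $\bar B_1$ to vanish at the evaluation points, which is why no $O(1/\sigma_n)$ sawtooth survives and only the smooth Edgeworth correction remains; and the uniform bound $|\kappa_m(\mathrm{Ber}(q))|\le q(1-q)$ for $m\ge 2$ (trivially checked for $m=3,4$, which is all you need) is what makes every remainder scale with $\sigma_n^2$ rather than with $n$ or $\nu_n$, yielding a constant $C$ independent of the $q_{j,n}$'s. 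Your alternative of simply specialising the general Esseen/Petrov lattice expansion to Bernoulli summands at the midpoints would of course also work and is closer in spirit to how the paper treats the result --- as an off-the-shelf tool.
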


Observe first that we can write:
\begin{multline*}
\mathbb{P}\left\{  \mathcal{M}_{N}=0\right\}  =\mathbb{P}\left\{\sum_{i=1}^{N}(\epsilon
_{i}-p_{i})\in]-1/2,1/2]\right\}\\
=\mathbb{P}\left\{  d_{N}^{-1/2}\sum_{i=1}^{N}%
m_{i}\leq \frac1 2 d_{N}^{-1/2}\right\}  -\mathbb{P}\left\{  d_{N}^{-1/2}%
\sum_{i=1}^{N}m_{i}\leq-\frac1 2 d_{N}^{-1/2}\right\}  .
\end{multline*}
Applying Theorem \ref{thm:denominator} to bound the first term of this
decomposition (with $k=\nu_{n}$ and $x_{n,k}=1/(2\sqrt{d_{N}})$) directly yields
that
\begin{multline*}
 \mathbb{P}\left\{  \frac{\sum_{i=1}^{N}m_{i}}{\sqrt{d_N}}\leq
\frac{1}{2\sqrt{d_N}}\right\}  \geq \Phi\left(\frac{1}{2\sqrt{d_{N}}}\right)\\+\frac{1-\frac{1}{4d_{N}}}%
{6\sqrt{d_{N}}}\phi\left(\frac{1}{2\sqrt{d_{N}}}\right)\left\{  1-\frac{2\sum_{i=1}^{n}p_{i}%
^{2}(1-p_{i})}{d_{N}}\right\}  - \frac{C}{d_{N}}.
\end{multline*}
For the second term, its application with $k=\nu_{n}-1$ entails that:
\begin{multline*}
-\mathbb{P}\left\{  \frac{1}{2\sqrt{d_N}}\sum_{i=1}^{N}m_{i}\geq
-\frac{1}{2\sqrt{d_N}} \right\}  \leq -\Phi\left(-\frac{1}{2\sqrt{d_{N}}}\right)\\-\frac{1-\frac{1}{4d_{N}}}{6\sqrt{d_{N}}}\phi\left(-\frac{1}{2\sqrt{d_{N}}}\right) 
\left\{  1-\frac{2\sum_{i=1}^{n}p_{i}%
^{2}(1-p_{i})}{d_{N}}\right\} -\frac{C}{d_{N}}.
\end{multline*}
If $d_N\geq 1$, it follows that
\begin{eqnarray*}
\mathbb{P}\left\{  \mathcal{M}_{N}=0\right\}  &\geq &\Phi\left(\frac{1}{2\sqrt{d_{N}}}%
\right)-\Phi\left(-\frac{1}{2\sqrt{d_{N}}}\right)- \frac{2C}{d_{N}}\\
&= & 2\int_{0}^{\frac{1}{2\sqrt{d_N}}}\phi(t)dt- \frac{2C}{d_{N}}
\geq \left( \phi(1/2)-\frac{2C}{\sqrt{d_N}}\right)\frac{1}{\sqrt{d_N}}.
\end{eqnarray*}
 We
thus obtain the desired result for $d_{N}\geq D$, where $D>0$ is any constant strictly larger than $4C^2\phi^2(1/2)$.

\subsection{Proof of Lemma \ref{lem:numerator}}

Observe that
\begin{equation}
Var \left(  \sum_{i=1}^{N} Z_{i}\right)  =\sum_{i=1}^{N}Var\left(
Z_{i}\right)  =Var \left(  \sum_{i=1}^{N} \epsilon_{i}^{*} \frac{x_{i}}{p_{i}%
}\right)  =Var \left(  \widehat{S}^{\boldsymbol{\epsilon} ^{*}_{N}%
}_{\boldsymbol{p} _{N}} \right)  .
\end{equation}
Let $\psi_{N}(u)=\log\mathbb{E}^{*}[\exp(\langle u,(\mathcal{Z}_{N}%
,\;\mathcal{M}_{N})\rangle)]$, $u=(u_{1},u_{2})\in\mathbb{R}^{+}%
\times\mathbb{R}$, be the log-Laplace of the $1$-lattice random vector
$(\mathcal{Z}_{N},\;\mathcal{M}_{N})$, where $\langle.,\; .\rangle$ is the
usual scalar product on $\mathbb{R}^{2}$. Denote by $\psi_{N}^{(1)}(u)$ and
$\psi_{N}^{(2)}(u)$ its gradient and its Hessian matrix respectively. Consider
now the conditional probability measure $\mathbb{P}^{*}_{u,N}$ given
$\mathcal{D}_{N}$ defined by the Esscher transform
\begin{equation}
d\mathbb{P}_{u,N}=\exp\left(  \left\langle u,(\mathcal{Z}_{N},\mathcal{M}%
_{N})\right\rangle -\psi_{N}(u)\right)  d\mathbb{P}.
\end{equation}
The $\mathbb{P}_{u,N}$-expectation is denoted by $\mathbb{E}_{u,N}[.]$, the
covariance matrix of a $\mathbb{P}_{u,N}$-square integrable random vector $Y$
under $\mathbb{P}_{u,N}$ by $Var_{u^{*},N}(Y)$. With $x=t\sqrt{n}/N$, by
exponential change of probability measure, we can rewrite the numerator of
\eqref{eq:ratio}  as
\begin{multline}
\mathbb{P}\left\{  \mathcal{Z}_{N}\geq x,\mathcal{M}_{N}=0\right\}
=\mathbb{E}_{u,N}\left[  e^{\psi_{N}(u)-\left\langle u,(\mathcal{Z}%
_{N},\mathcal{M}_{N})\right\rangle }\mathbb{I}\{\mathcal{Z}_{N}\geq
x,\mathcal{M}_{N}=0\}\right] \nonumber\\
=H(u)\mathbb{E}_{u,N}\left[  e^{-\left\langle u,(\mathcal{Z}_{N}%
-x,\mathcal{M}_{N})\right\rangle }\mathbb{I}\{\mathcal{Z}_{N}\geq
x,\mathcal{M}_{N}=0\}\right]  ,
\end{multline}
where we set $H(u)=\exp(-\left\langle u,(x,0)\right\rangle +\psi_{N}(u))$.
Now, as $\psi_{N}$ is convex, the point defined by
\[
u^{\ast}=(u_{1}^{*},0)=\arg\sup_{u\in\mathbb{R}_{+}\times\{0\}}\{\langle
u,(x,0)\rangle-\psi_{N}(u)\}
\]
is such that $\psi_{N}^{(1)}(u^{\ast})=(x,0)$. Since $\mathbb{E}%
[\exp(<u,(\mathcal{Z}_{N},\mathcal{M}_{N})>)]=\exp(\psi_{N}(u))$, by
differentiating one gets
\[
\mathbb{E}[e^{<u^{\ast},(\mathcal{S}_{N},\;\mathcal{M}_{N})>}(\mathcal{S}%
_{N},\;\mathcal{M}_{N})]=\psi_{N}^{(1)}(u^{\ast})e^{\psi_{N}(u^{\ast}%
)}=(x,0)e^{\psi_{N}(u^{\ast})},
\]
so that $\mathbb{E}_{u^{\ast},N}[(\mathcal{Z}_{N},\mathcal{M}_{N})]=(x,0)$ and
$Var_{u^{\ast},N}[(\mathcal{Z}_{N},\mathcal{M}_{N})]=\psi_{N}^{(2)}(u^{\ast}%
)$. Choosing $u=u^{*}$, integration by parts combined with straightforward
changes of variables yields
\[
\mathbb{E}_{u^{*},N}[e^{-\left\langle u^{*},(\mathcal{Z}_{N}-x,\mathcal{M}%
_{N})\right\rangle }\mathbb{I}\{\mathcal{Z}_{N}\geq x,\mathcal{M}%
_{N}=0\}]\newline \leq\mathbb{P}^{*}_{u^{*},N}\left\{  \mathcal{M}%
_{N}=0\right\}  .
\]
Hence, we have the bound:
\begin{equation}
\label{eq:prod}\mathbb{P}\left\{  \mathcal{Z}_{N}\geq x,\mathcal{M}%
_{N}=0\right\}  \leq H(u^{*})\times\mathbb{P}_{u^{*},N}\left\{  \mathcal{M}%
_{N}=0\right\}  .
\end{equation}
We shall bound each factor involved in \eqref{eq:prod}  separately. We start
with bounding $H(u^{*})$, which essentially boils down to bounding
$\mathbb{E}[e^{\langle u^{*},(\mathcal{Z}_{N},\mathcal{M}_{N})\rangle}]$.

\begin{lemma}
\label{lem:factor1} Under Theorem \ref{thm:rejective}'s assumptions, we have:
\begin{align}
H(u^{*})  &  \leq\exp\left(  -\frac{Var\left(  \sum_{i=1}^{N}Z_{i} \right)
}{\left(  \max_{1\leq j\leq N}\vert x_{j}\vert/p_{j} \right)  ^{2}}h\left(
\frac{N}{\sqrt{n}}\frac{x\max_{1\leq j\leq N}\vert x_{j}\vert/p_{j}%
}{Var\left(  \sum_{i=1}^{N}Z_{i} \right)  } \right)  \right) \\
&  \leq\exp\left(  -\frac{N^{2}x^{2}/n}{2\left(  Var\left(  \sum_{i=1}%
^{N}Z_{i} \right)  +\frac{1}{3}\frac{N}{\sqrt{n}}x \max_{1\leq j\leq N}\vert
x_{j}\vert/p_{j} \right)  } \right)  ,
\end{align}
where $h(x)=(1+x)\log(1+x)-x$ for $x\geq0$.
\end{lemma}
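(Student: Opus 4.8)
\medskip\noindent\textbf{Proof plan.} The plan is to reduce the control of $H(u^{*})$ to a one--dimensional Chernoff--Bennett estimate for $\mathcal{Z}_{N}$ alone, and then run the classical optimization. Since the second coordinate of $u^{*}=(u_{1}^{*},0)$ vanishes, $H(u^{*})=\exp(-\langle u^{*},(x,0)\rangle+\psi_{N}(u^{*}))=\exp(-u_{1}^{*}x+\psi_{N}(u_{1}^{*},0))$; and as $u_{1}^{*}$ maximizes $\lambda\mapsto\lambda x-\psi_{N}(\lambda,0)$ over $\mathbb{R}_{+}$ by its very definition, one has $-u_{1}^{*}x+\psi_{N}(u_{1}^{*},0)=-\sup_{\lambda\geq 0}\{\lambda x-\psi_{N}(\lambda,0)\}\leq-\lambda x+\psi_{N}(\lambda,0)$ for every $\lambda\geq 0$, hence $H(u^{*})\leq\inf_{\lambda\geq 0}\exp(-\lambda x+\psi_{N}(\lambda,0))$. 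This step discards the lattice structure and the conditioning on $\{\mathcal{M}_{N}=0\}$, both of which are only needed for the other factor $\mathbb{P}_{u^{*},N}\{\mathcal{M}_{N}=0\}$ in \eqref{eq:prod}.

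Next I would unfold $\psi_{N}(\lambda,0)=\log\mathbb{E}[\exp(\lambda\mathcal{Z}_{N})]=\sum_{i=1}^{N}\log\mathbb{E}[\exp((\lambda\sqrt{n}/N)Z_{i})]$, using that under the Poisson law $\mathbb{P}$ the variables $Z_{i}=(\epsilon_{i}-p_{i})(x_{i}/p_{i}-\theta_{N})$ are independent and centered. The quantitative input needed is an a.s.\ bound $|Z_{i}|\leq c$ with $c$ comparable to $\max_{1\leq j\leq N}|x_{j}|/p_{j}$; this follows from $|\epsilon_{i}-p_{i}|\leq 1$ together with the observation that $\theta_{N}=\sum_{j}\frac{p_{j}(1-p_{j})}{d_{N}}\frac{x_{j}}{p_{j}}$ is a convex combination of the $x_{j}/p_{j}$'s, so that $x_{i}/p_{i}-\theta_{N}$ cannot exceed the spread of the $x_{j}/p_{j}$'s (a little care with signs is required to reach the constant $\max_{j}|x_{j}|/p_{j}$ itself, up to a universal factor that can be tracked or absorbed into $C$). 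Feeding this into the elementary bound $e^{sz}\leq 1+sz+\frac{e^{sc}-1-sc}{c^{2}}z^{2}$ ($z\leq c$, $s\geq 0$), which upon taking expectations yields $\mathbb{E}[e^{sZ}]\leq\exp(\frac{Var(Z)}{c^{2}}(e^{sc}-1-sc))$ for centered $Z\leq c$, applied to the scaled variables $(\sqrt{n}/N)Z_{i}$ and then summed, one gets $\psi_{N}(\lambda,0)\leq\frac{Var(\sum_{i}Z_{i})}{c^{2}}(e^{\lambda\sqrt{n}c/N}-1-\lambda\sqrt{n}c/N)$, since $\sum_{i}Var(Z_{i})=Var(\sum_{i}Z_{i})$ and the common factor $n/N^{2}$ cancels between the variances and the squared bounds.

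It then remains to optimize in $\lambda\geq 0$: setting $\mu=\lambda\sqrt{n}c/N$, the last display gives $H(u^{*})\leq\exp(-\frac{Nx}{\sqrt{n}c}\mu+\frac{Var(\sum_{i}Z_{i})}{c^{2}}(e^{\mu}-1-\mu))$, whose minimizer over $\mu\geq 0$ is $\mu=\log(1+\frac{N}{\sqrt{n}}\frac{xc}{Var(\sum_{i}Z_{i})})$; substituting it collapses the exponent to $-\frac{Var(\sum_{i}Z_{i})}{c^{2}}h(\frac{N}{\sqrt{n}}\frac{xc}{Var(\sum_{i}Z_{i})})$ with $h(y)=(1+y)\log(1+y)-y$, which is the first asserted inequality. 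The second, Bernstein-type bound then drops out of the elementary estimate $h(y)\geq y^{2}/(2(1+y/3))$ for $y\geq 0$, exactly as in the classical case.

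I expect the only genuinely delicate points to be: (i) checking that the supremum defining $u_{1}^{*}$ is attained --- which it is, because $\mathcal{Z}_{N}$ is bounded and hence has an everywhere-finite log-Laplace $\psi_{N}(\cdot,0)$, provided $x$ does not exceed the essential supremum of $\mathcal{Z}_{N}$ (for larger $x$ the event $\{\mathcal{Z}_{N}\geq x,\,\mathcal{M}_{N}=0\}$ is empty and there is nothing to prove) --- and (ii) pinning down the constant $c$ in the a.s.\ bound on the $Z_{i}$'s in terms of $\max_{j}|x_{j}|/p_{j}$. Everything else is the standard Bennett/Bernstein bookkeeping, here carried out with the extra scaling factor $\sqrt{n}/N$.
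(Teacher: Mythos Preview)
Your approach is essentially identical to the paper's: bound $H((u_{1},0))$ by the one--dimensional Chernoff expression, apply the standard Bennett moment bound $\mathbb{E}[e^{sZ_{i}}]\leq\exp\bigl(Var(Z_{i})(e^{sc}-1-sc)/c^{2}\bigr)$ termwise using independence under $\mathbb{P}$, optimize in the scalar parameter, and then invoke $h(y)\geq y^{2}/(2(1+y/3))$ for the Bernstein form. The paper does not discuss attainment of $u^{*}$ at all, so your remark (i) is extra care on your part.

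Your caution in point (ii) is well placed and in fact sharper than the paper. The paper simply asserts ``we $\mathbb{P}$-almost surely have $|Z_{i}|\leq\max_{1\leq j\leq N}|x_{j}|/p_{j}$'' and moves on. But as you note, $\theta_{N}=\sum_{j}\frac{p_{j}(1-p_{j})}{d_{N}}\frac{x_{j}}{p_{j}}$ is only a convex combination of the $x_{j}/p_{j}$'s, so in general $|x_{i}/p_{i}-\theta_{N}|$ can be as large as $2\max_{j}|x_{j}|/p_{j}$ (take two indices with $x_{j}/p_{j}=\pm 1$ and put almost all the weight $p_{j}(1-p_{j})$ on the negative one); combined with $|\epsilon_{i}-p_{i}|\leq 1$ this gives $|Z_{i}|\leq 2\max_{j}|x_{j}|/p_{j}$, and the factor~$2$ cannot be removed in general. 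This does not affect the structure of the argument---it only changes the explicit constant in the exponent and is indeed absorbed by the universal constant $C$ in Theorem~\ref{thm:rejective}---but the lemma as literally stated needs $c=2\max_{j}|x_{j}|/p_{j}$ rather than $\max_{j}|x_{j}|/p_{j}$.
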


\begin{proof}
Using the standard argument leading to the Bennett-Bernstein bound, observe that: $\forall i\in \{1,\; \ldots,\; N  \}$, $\forall u_1> 0$,
\begin{equation*}
\mathbb{E}[e^{u_1Z_i}%
]\leq \exp\left( Var(Z_i)\frac{\exp\left(u_1\max_{1\leq j\leq N}%
\frac{\vert x_j\vert}{p_j} \right) - 1- u_1\max_{1\leq j\leq N}%
\frac{\vert x_j\vert}{p_j}}{\left(\max_{1\leq j\leq N}\frac{\vert x_j\vert}%
{p_j}\right)^2} \right).
\end{equation*}
since we $\mathbb{P}%
$-almost surely have  $\vert Z_{i}\vert \leq \max_{1\leq j\leq N}%
\vert x_j\vert/p_j $ for all $i\in \{1,\; \ldots,\; N  \}$. Using the independence of the $Z_i$'s, we obtain that: $\forall u_1> 0$,
\begin{multline*}
\mathbb{E}[e^{u_1 \mathcal{Z}_N}]\leq \\
\exp\left( Var\left(  \sum_{i=1}%
^NZ_i\right)\frac{\exp\left(\frac{\sqrt{n}}{N}u_1\max_{1\leq j\leq N}%
\frac{\vert x_j\vert}{p_j} \right) - 1- \frac{\sqrt{n}}{N}%
u_1\max_{1\leq j\leq N}\frac{\vert x_j\vert}{p_j}}{\left(\max_{1\leq j\leq N}%
\frac{\vert x_j\vert}{p_j}\right)^2} \right).
\end{multline*}%

The resulting upper bound for $H((u_1,0))$ being minimum for
$$
u_1=\frac{N}%
{\sqrt{n}}\frac{\log \left( 1+\frac{N}{\sqrt{n}}\frac{x\max_{1\leq j\leq N}%
\vert x_j\vert/p_j}{Var(\sum_{i=1}^NZ_i)} \right)}{\max_{1\leq j\leq N}%
\vert x_j\vert/p_j},
$$
this yields
\begin{equation}
H(u^*)\leq \exp\left( -\frac{Var\left( \sum_{i=1}^NZ_i \right)}%
{\left(\max_{1\leq j\leq N}\vert x_j\vert/p_j \right)^2}h\left( \frac{N}%
{\sqrt{n}}\frac{x\max_{1\leq j\leq N}\vert x_j\vert/p_j}{Var\left( \sum_{i=1}%
^NZ_i \right)} \right) \right).
\end{equation}%

Using the classical
inequality
\begin{equation*}
h(x)\geq \frac{x^{2}%
}{2(1+x/3)},\text{ for }x\geq 0,
\end{equation*}%

we also get that
\begin{equation}\label{eq:prod}
H(u^*)\leq \exp\left( -\frac{N^2x^2/n}{2\left(Var\left( \sum_{i=1}%
^NZ_i \right)+\frac{1}{3}\frac{N}{\sqrt{n}}x \max_{1\leq j\leq N}%
\vert x_j\vert/p_j \right)} \right).
\end{equation}
$\square$
\end{proof}


We now prove the lemma stated below, which provides an upper bound for
$\mathbb{P}^{*}_{u^{*},N}\{ \mathcal{M}_{N}=0\}$.

\begin{lemma}
\label{lem:factor2} Under Theorem \ref{thm:rejective}'s assumptions, there
exists a universal constant $C^{\prime}$ such that: $\forall N\geq1$,
\begin{equation}
\mathbb{P}_{u^{*},N}\left\{  \mathcal{M}_{N}=0\right\}  \leq C^{\prime
}\frac{1}{\sqrt{d_{N}}}.
\end{equation}
\end{lemma}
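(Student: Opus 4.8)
The plan rests on one structural observation: the statistic driving the Esscher transform, $\mathcal{Z}_N=\frac{\sqrt{n}}{N}\sum_{i=1}^N(\epsilon_i-p_i)(x_i/p_i-\theta_N)$, is an affine function of the Bernoulli vector $\boldsymbol{\epsilon}_N$, and the tilt uses $u^*=(u_1^*,0)$. Hence $d\mathbb{P}_{u^*,N}/d\mathbb{P}=\exp(u_1^*\mathcal{Z}_N-\psi_N(u^*))$ factorizes over coordinates, so that under $\mathbb{P}_{u^*,N}$ the variables $\epsilon_1,\dots,\epsilon_N$ remain mutually independent, $\epsilon_i$ being now Bernoulli with parameter $p_i^*=p_ie^{a_i}/(p_ie^{a_i}+1-p_i)$ where $a_i=u_1^*\frac{\sqrt{n}}{N}(x_i/p_i-\theta_N)$. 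Write $\nu_N^*=\sum_{i=1}^Np_i^*$ and $V_N^*=\sum_{i=1}^Np_i^*(1-p_i^*)=Var_{u^*,N}\!\left(\sum_{i=1}^N\epsilon_i\right)$.

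First I would turn the event into a point probability of a sum of independent Bernoulli variables: since the canonical representation forces $\sum_{i=1}^Np_i=n\in\mathbb{N}$, we have $\{\mathcal{M}_N=0\}=\{\sum_{i=1}^N(\epsilon_i-p_i)=0\}=\{\sum_{i=1}^N\epsilon_i=n\}$, so that $\mathbb{P}_{u^*,N}\{\mathcal{M}_N=0\}=\mathbb{P}_{u^*,N}\{\sum_{i=1}^N\epsilon_i=n\}$. Then I would invoke the local Berry--Esseen bound of \cite{Deheuvels} (Theorem \ref{thm:denominator}), applied exactly as in the proof of Lemma \ref{lem:denominator} but this time to the array $(\epsilon_i)_{1\le i\le N}$ under $\mathbb{P}_{u^*,N}$ with means $p_i^*$, at the lattice point corresponding to $\sum_i\epsilon_i=n$. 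Up to lower-order terms this yields
\[
\mathbb{P}_{u^*,N}\left\{\sum_{i=1}^N\epsilon_i=n\right\}\leq \frac{1}{\sqrt{2\pi V_N^*}}\exp\!\left(-\frac{(n-\nu_N^*)^2}{2V_N^*}\right)+\frac{C}{V_N^*},
\]
and hence $\mathbb{P}_{u^*,N}\{\mathcal{M}_N=0\}\le C_0/\sqrt{V_N^*}$ for a universal constant $C_0$, provided $V_N^*$ is bounded below (which is part of what the last step must secure).

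It then remains to compare the tilted variance $V_N^*$ with $d_N=\sum_{i=1}^Np_i(1-p_i)$, and this is the heart of the proof. The ingredients are the first-order (saddle-point) identity $\psi_N^{(1)}(u^*)=(x,0)$, which gives $\sum_{i=1}^N(p_i^*-p_i)(x_i/p_i-\theta_N)=t$ (recall $x=t\sqrt{n}/N$), the defining property of $\theta_N$, namely the weighted-centering relation $\sum_{i=1}^Np_i(1-p_i)(x_i/p_i-\theta_N)=0$ that was used earlier to make $\sum_iZ_i$ and $\sum_im_i$ uncorrelated, and the uniform bound $|x_i/p_i-\theta_N|\le 2\max_{1\le j\le N}|x_j|/p_j$. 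When the tilt is mild --- the regime where $t$ is of Gaussian order $\sigma_N^2/\max_j(|x_j|/p_j)$, so that $u_1^*\frac{\sqrt{n}}{N}\max_j(|x_j|/p_j)$, and therefore all the $a_i$'s, stay bounded by an absolute constant --- the ratios $p_i^*(1-p_i^*)/(p_i(1-p_i))$ are bounded above and below, whence $V_N^*$ is at least a fixed multiple of $d_N$ and the claim follows from the previous step. For larger $t$ one has to argue that a strong tilt necessarily drives $\nu_N^*$ far enough from $n$, relative to $V_N^*$, that the Gaussian factor $\exp(-(n-\nu_N^*)^2/(2V_N^*))$ absorbs the loss $\sqrt{d_N/V_N^*}$; concretely this reduces to showing $(n-\nu_N^*)^2\ge c\,V_N^*\log(d_N/V_N^*)$ whenever $V_N^*\le d_N$, an inequality one would extract from the two displayed identities together with the convexity of $\psi_N$. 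Converting the saddle equation and the centering of the coefficients into such a control of the tilt strength is the step I expect to be the main obstacle; the rest is routine bookkeeping of the kind already carried out in Lemmas \ref{lem:denominator} and \ref{lem:factor1}.
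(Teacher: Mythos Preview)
Your setup through the local Berry--Esseen step matches the paper's: under $\mathbb{P}_{u^*,N}$ the $\epsilon_i$ remain independent Bernoulli with tilted means, and the paper likewise applies a Deheuvels bound (Theorem~1.2 there, restated in the proof as Theorem~\ref{thm:numerator}) to obtain $\mathbb{P}_{u^*,N}\{\mathcal{M}_N=0\}\le C/\sqrt{d_{N,u^*}}$ with $d_{N,u^*}=V_N^*$. The divergence is entirely in how you compare $V_N^*$ with $d_N$.

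Here you overlook the one piece of information that makes the paper's argument short. You quote the saddle identity $\psi_N^{(1)}(u^*)=(x,0)$ but extract only its first coordinate. The second coordinate says precisely $\mathbb{E}_{u^*,N}[\mathcal{M}_N]=0$, i.e.\ $\nu_N^*=\sum_ip_i^*=n$. Two consequences follow immediately. First, the Gaussian factor $\exp\bigl(-(n-\nu_N^*)^2/(2V_N^*)\bigr)$ in your display is identically~$1$, so your ``large $t$'' mechanism --- having that factor absorb the loss $\sqrt{d_N/V_N^*}$ --- cannot work: there is nothing to absorb with, and the inequality $(n-\nu_N^*)^2\ge cV_N^*\log(d_N/V_N^*)$ you hope to extract is simply $0\ge cV_N^*\log(d_N/V_N^*)$. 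Second, once $\nu_N^*=n$ one has $d_{N,u^*}=\mathbb{E}_{u^*,N}\bigl[(\sum_im_i)^2\bigr]$, and the paper dispatches the comparison in one line by rewriting this tilted second moment as a $\mathbb{P}$-expectation against the Esscher weight and invoking the already-established bound $H(u^*)\le1$ from Lemma~\ref{lem:factor1} to conclude $d_{N,u^*}\ge d_N$ uniformly in $t$, with no regime splitting and no use of the centering relation for $\theta_N$ or the bound on $|x_i/p_i-\theta_N|$.

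So the gap in your plan is real: the fallback regime relies on a decay that is absent. The fix is simply to read off the second coordinate of the saddle identity you already wrote down, and then compare tilted and untilted second moments of $\sum_im_i$ via $H(u^*)\le1$.
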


\begin{proof}
Under the probability measure $\mathbb{P}_{u^*,N}%
$, the $\varepsilon _{i}%
$'s are still
independent Bernoulli variables, with means now given by
\begin{equation*}
\pi^* _{i}\overset{def}{=}\sum_{s\in
\mathcal{P}(\mathcal{I}_{N}%
)}e^{\left\langle u^*,(\mathcal{Z}_{N}(s),%
\mathcal{M}_{N}%
(s))\right\rangle -\psi _{N}(u^*)}\mathbb{I}\left\{ i\in
s\right\} R_{N}%
(s)>0,
\end{equation*}
for $i\in \{1,\; \ldots,\; N  \}$.
Since $\mathbb{E}%
_{u^{\ast },N}[\mathcal{M}_{N}]=0$, we have $\sum_{i=1}^{N}\pi^*_{i}%
=n$ and thus
\begin{equation*}
d_{N,u^{\ast
}}\overset{def}{=}%
Var_{u^{\ast },N}\left(\sum_{i=1}^{N}\varepsilon _{i}\right)=\sum_{i=1}%
^{N}\pi^* _{i}(1-\pi ^*_{i})\leq n.
\end{equation*}%

We can thus apply the local Berry-Esseen bound established in \cite{Deheuvels}
for sums of independent (and possibly non identically) Bernoulli random variables, recalled in Theorem \ref{thm:numerator}%
.
\begin{theorem}\label{thm:numerator}(\cite{Deheuvels}%
, Theorem 1.2) Let $(Y_{j,n})_{1\leq j\leq n}%
$ be a triangular array of  independent Bernoulli random variables with means $q_{1,n}%
,\; \ldots,\; q_{n,n}%
$ in $(0,1)$ respectively. Denote by $\sigma^2_n=\sum_{i=1}^nq_{i,n}%
(1-q_{i,n})$ the variance of the sum $\Sigma_n=\sum_{i=1}^nY_{i,n}%
$ and by $\nu_n=\sum_{i=1}^nq_{i,n}%
$ its mean. Considering the cumulative distribution function (cdf) $F_n(x)=\mathbb{P}%
\{ \sigma_n^{-1}%
(\Sigma_n-\nu_n )\leq x \}$, we have: $\forall n\geq 1$,
\begin{equation}
\sup_x\left(1+\vert x\vert^3  \right)\left\vert  F_n(x)-\Phi(x)\right\vert\leq \frac{C}%
{\sigma_n},
\end{equation}
where $\Phi(x)=(2\pi)^{-1/2}\int_{-\infty}%
^x\exp(-z^2/2)dz$ is the cdf of the standard normal distribution $\mathcal{N}%
(0,1)$ and $C<+\infty$ is a universal constant.
\end{theorem}%

Applying twice a pointwise version of the bound recalled above (for $x=0$ and $x=1/\sqrt{d_{N,u^*}%
}$), we obtain that
\begin{multline*}
\mathbb{P}_{u^*,N}\left\{ \mathcal{M}%
_{N}=0\right\}= \mathbb{P}_{u^*,N}\left\{d_{N,u^{\ast
}}^{-1/2}
\sum_{i=1}^Nm_i\leq 0\right\}\\-  \mathbb{P}_{u^*,N}\left\{d_{N,u^{\ast
}%
}^{-1/2} \sum_{i=1}^Nm_i\leq -d_{N,u^{\ast
}}^{-1/2}%
\right\}\\
\leq \frac{2C}{\sqrt{d_{N,u^*}}}+\Phi(0)-\Phi(-d_{N,u^*}%
^{-1/2})\leq  \left(\frac{1}{\sqrt{2\pi}}+2C\right)\frac{1}{\sqrt{d_{N,u^*}}%
},
\end{multline*}%

by means of the finite increment theorem.
Finally, observe that:
\begin{multline*}
d_{N,u^*}%
=\mathbb{E}_{u^*, N}\left[\left(\sum_{i=1}^Nm_i\right)^2 \right]=\mathbb{E}%
\left[ \left(\sum_{i=1}^Nm_i\right)^2/H(u^*) \right]\\
\geq \mathbb{E}%
\left[ \left(\sum_{i=1}^Nm_i\right)^2 \right]=d_N,
\end{multline*}%

since we proved that $H(u^*)\leq 1$. Combined with the previous bound, this yields the desired result.
$\square$
\end{proof}%

Lemmas \ref{lem:factor1} and \ref{lem:factor2} combined with Eq.
\eqref{eq:prod}  leads to the bound stated in Lemma \ref{lem:numerator}.

\subsection*{Proof of Theorem \ref{thm:final}}
We start with proving the preliminary result below.

\begin{lemma}
\label{lem:bias}Let $\pi_{1},\; \ldots, \pi_N$ be the first order
inclusion probabilities of a rejective sampling of size $n$ with canonical representation characterized by the Poisson weights $p_1,\; \ldots,\; p_N$.Provided that $d_{N}=\sum_{i=1}^{N}p_{i}(1-p_{i}%
)\geq1$, we have: $\forall i\in\{1,\; \ldots,\; N  \}$,
\[
\left\vert \frac{1}{\pi_{i}}-\frac{1}{p_{i}}\right\vert\leq\frac{6}{d_{N}}\times \frac{1-\pi_{i}}%
{\pi_{i}}.
\]
\end{lemma}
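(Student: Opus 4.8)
The plan is to reduce everything to the conditional‑Poisson representation recalled in Section~\ref{subsec:Poisson}. Let $\boldsymbol{\epsilon}_N$ be a Poisson scheme with inclusion probabilities $\mathbf{p}_N$, put $S=\sum_{j=1}^{N}\epsilon_j$ and $S^{(i)}=S-\epsilon_i=\sum_{j\neq i}\epsilon_j$. Since $\boldsymbol{\epsilon}_N^{*}$ has the law of $\boldsymbol{\epsilon}_N$ conditioned on $\{S=n\}$ and $\{\epsilon_i=1,\,S=n\}=\{\epsilon_i=1,\,S^{(i)}=n-1\}$, independence gives $\pi_i=p_i\,\mathbb{P}\{S^{(i)}=n-1\}/\mathbb{P}\{S=n\}$. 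Conditioning on $\epsilon_i$ also gives $\mathbb{P}\{S=n\}=p_i\mathbb{P}\{S^{(i)}=n-1\}+(1-p_i)\mathbb{P}\{S^{(i)}=n\}$, and combining these two relations (one for $1/\pi_i$, one for $(1-\pi_i)/\pi_i$) yields the exact identity
\[
\frac{1}{\pi_i}-\frac{1}{p_i}=\frac{1-\pi_i}{\pi_i}\left(1-\frac{\mathbb{P}\{S^{(i)}=n-1\}}{\mathbb{P}\{S^{(i)}=n\}}\right).
\]
Hence the Lemma is \emph{equivalent} to the purely Poissonian estimate $\bigl|\,1-\mathbb{P}\{S^{(i)}=n-1\}/\mathbb{P}\{S^{(i)}=n\}\,\bigr|\le 6/d_N$, and from now on only properties of sums of independent Bernoulli variables are needed.

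Next I would record that $S^{(i)}$ is a sum of $N-1$ independent Bernoulli variables with mean $\mu_i=n-p_i\in(n-1,n)$ — so that $n-1$ and $n$ are precisely the two consecutive integers straddling the mean — and with variance $\sigma_i^{2}=\mathrm{Var}(S^{(i)})=d_N-p_i(1-p_i)$, which satisfies $\tfrac34 d_N\le\sigma_i^{2}\le d_N$ as soon as $d_N\ge1$. The crux is then to show that $\mathbb{P}\{S^{(i)}=n\}$ is of order $\sigma_i^{-1}$ whereas the \emph{difference} $\mathbb{P}\{S^{(i)}=n\}-\mathbb{P}\{S^{(i)}=n-1\}$ is only of order $\sigma_i^{-3}$, so that the ratio above differs from $1$ by $O(\sigma_i^{-2})=O(d_N^{-1})$. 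For this I would invoke a second‑order local limit theorem for sums of independent (non identically distributed) Bernoulli variables, in the same vein as the expansions of \cite{Deheuvels} recalled in Theorems~\ref{thm:denominator} and \ref{thm:numerator}: writing $\mathbb{P}\{S^{(i)}=k\}=\sigma_i^{-1}\varphi_i\!\bigl((k-\mu_i)/\sigma_i\bigr)+O(\sigma_i^{-3})$, where $\varphi_i$ is the standard Gaussian density corrected by its first ($O(1/\sigma_i)$) Edgeworth term, the gain of one power of $\sigma_i$ in the difference comes from the facts that $(n-1-\mu_i)/\sigma_i$ and $(n-\mu_i)/\sigma_i$ are $1/\sigma_i$ apart and both within $3/(2\sigma_i)$ of the origin, where $\varphi_i'$ vanishes to first order; carrying the (bounded, universal) constants through this computation should produce the explicit constant $6$.

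Since that expansion is informative only when $\sigma_i^{2}$ is not too small, I would dispose of the remaining regime — where $6/d_N\ge6$, i.e.\ $d_N$ bounded — by a crude argument: there the claim reduces to $\mathbb{P}\{S^{(i)}=n-1\}\le 7\,\mathbb{P}\{S^{(i)}=n\}$ (and the symmetric inequality), which follows from the log‑concavity of the probability mass function of a sum of independent Bernoulli variables (its probability generating function has only real roots), combined with the bracketing $n-1<\mu_i<n$ and the lower bound $\sigma_i^{2}\ge3/4$, which forbid $S^{(i)}$ from being too concentrated at a single atom.

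The step I expect to be the main obstacle is getting $\mathbb{P}\{S^{(i)}=n\}-\mathbb{P}\{S^{(i)}=n-1\}$ to the precision $O(d_N^{-3/2})$. A plain local CLT — or even the $O(\sigma^{-2})$ Edgeworth bound for the \emph{c.d.f.} in Theorem~\ref{thm:denominator}, once one forms the needed second difference — only controls this quantity at order $d_N^{-1}$, which, after dividing by $\mathbb{P}\{S^{(i)}=n\}\asymp d_N^{-1/2}$, would give the far weaker $O(d_N^{-1/2})$; one genuinely needs the first Edgeworth correction of the \emph{point} masses, uniformly over the array, and then has to check that the resulting constants (and the bound used for small $d_N$) are all compatible with the clean value $6/d_N$ valid for every $d_N\ge1$. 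An alternative to the Edgeworth route, which I would keep in reserve, is to expand $\mathbb{P}\{S^{(i)}=n\}-\mathbb{P}\{S^{(i)}=n-1\}$ by Fourier inversion and exploit the cancellation in the integrand, or to iterate the Stein‑type identity $\mathbb{E}[(S^{(i)}-\mu_i)h(S^{(i)})]=\sum_{j\neq i}p_j(1-p_j)\,\mathbb{E}[h(S^{(ij)}+1)-h(S^{(ij)})]$ with $h=\mathbb{I}\{\cdot=n\}-\mathbb{I}\{\cdot=n-1\}$ down to doubly reduced sums, at the cost of more bookkeeping.
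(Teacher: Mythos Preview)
Your opening reduction is correct and in fact lines up exactly with what the paper is bounding: from $\dfrac{1-\pi_i}{\pi_i}=\dfrac{1-p_i}{p_i}\cdot\dfrac{\mathbb{P}\{S^{(i)}=n\}}{\mathbb{P}\{S^{(i)}=n-1\}}$ one sees that the ratio $R_i\overset{def}{=}\mathbb{P}\{S^{(i)}=n-1\}/\mathbb{P}\{S^{(i)}=n\}$ is precisely the quantity $\dfrac{\pi_i(1-p_i)}{p_i(1-\pi_i)}$ that the paper controls, and both proofs amount to showing $|R_i-1|\le 6/d_N$. The divergence is in \emph{how} this is done. The paper does not touch local limit theory at all: it uses H\'ajek's exact representation (eq.~(5.14) of \cite{Hajek64}) of $R_i$ as an average over Poisson samples $s\not\ni i$ of $\sum_{h\in s}(1-p_h)\big/\!\big(\sum_{j\in s}(1-p_j)+(p_h-p_i)\big)$, expands $1/(1+x)$ to second order, and bounds the remainder using only $|p_h-p_i|\le 1$ together with H\'ajek's Lemma~2.2, namely $\sum_{j\in s}(1-p_j)\ge d_N/2$. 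This yields the two--sided estimate $1-2/d_N\le R_i\le 1+2/d_N+4/d_N^2$ by pure algebra, and the constant $6$ then drops out from $4/d_N^2\le 4/d_N$ when $d_N\ge 1$.

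Your Edgeworth route, by contrast, has a genuine gap exactly where you flag it. The expansion of Theorem~\ref{thm:denominator} controls the c.d.f.\ to $O(\sigma^{-2})$; after the second differencing needed to access $\mathbb{P}\{S^{(i)}=n\}-\mathbb{P}\{S^{(i)}=n-1\}$ the residual is still $O(\sigma^{-2})$, which, divided by $\mathbb{P}\{S^{(i)}=n\}\asymp\sigma^{-1}$, gives only $|R_i-1|=O(d_N^{-1/2})$. To reach $O(d_N^{-1})$ you would need a pointwise Edgeworth expansion for the \emph{point masses} with remainder $O(\sigma^{-3})$, uniformly over arbitrary Bernoulli arrays, and with explicit numerical constants; even granting such a result, there is no reason to expect the constant $6$ to emerge, and your log--concavity patch for small $d_N$ would have to dovetail with whatever constant the expansion produces. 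In short: your identity is a clean equivalent reformulation, and your program would deliver $|R_i-1|\le C/d_N$ for \emph{some} universal $C$ and sufficiently large $d_N$, but the lemma as stated --- with constant $6$ valid for every $d_N\ge 1$ --- really relies on the elementary H\'ajek--type argument the paper uses, which avoids any analytic approximation and any case split.
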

\begin{proof}
The proof follows the representation (5.14) on page 1509 of \cite{Hajek64}.
We have
 For all $i\in \{1,\; \ldots,\; N\}$, we have:
\begin{eqnarray*}
\frac{\pi_{i}}{p_{i}}\frac{1-p_{i}}{1-\pi_{i}}  &=&\frac{\sum_{s\in \mathcal{P}(\mathcal{I}_N):\;  i\in \mathcal{I}_N\setminus\{s \}}P(s)\sum_{h\in s}\frac{1-p_{h}}%
{\sum_{j\in s}(1-p_{j})+(p_{h}-p_{i})}}{ \sum_{s\in \mathcal{P}(\mathcal{I}_N):\; i\in
\mathcal{I}_N\setminus\{s \}}P(s)}\\
&=&\frac{\sum_{s:\ i\in \mathcal{I}_N\setminus\{s \}}%
P_N(s)\sum_{h\in s}\frac{1-p_{h}}{\sum_{j\in s}(1-p_{j})\left(  1+\frac
{(p_{h}-p_{i})}{\sum_{j\in s}(1-p_{j})}\right)  }}{\sum_{s:\ i\in \mathcal{I}_N\setminus\{s \}}P_N(s)}.
\end{eqnarray*}
Now recall that for any $x\in]-1,1[ $, we have:
\[
1-x\leq\frac{1}{1+x}\leq1-x+x^{2}.
\]
It follows that
\begin{align*}
\frac{\pi_{i}}{p_{i}}\frac{1-p_{i}}{1-\pi_{i}}  & \leq1-\left(  \sum_{s:\ i\in
\mathcal{I}_N\setminus\{s \}}P(s)\right)  ^{-1}\sum_{s:\ i\in \mathcal{I}_N\setminus\{s \}}P(s)\sum_{h\in s}\frac{(1-p_{h}%
)(p_{h}-p_{i})}{\left(  \sum_{j\in s}(1-p_{j})\right)  ^{2}}\\
& +\left(  \sum_{s:\ i\in \mathcal{I}_N\setminus\{s \}}P(s)\right)  ^{-1}\sum_{s:\ i\in \mathcal{I}_N\setminus\{s \}}%
P(s)\sum_{h\in s}\frac{(1-p_{h})(p_{h}-p_{i})^{2}}{\left(  \sum_{j\in
s}(1-p_{j})\right)  ^{3}}%
\end{align*}
Following now line by line the proof on p. 1510 in \cite{Hajek64} and noticing that $\sum_{j\in s}%
(1-p_{j})\geq1/2d_{N}$ (see Lemma 2.2 in \cite{Hajek64}), we have
\begin{align*}
\left\vert \sum_{h\in s}\frac{(1-p_{h})(p_{h}-p_{i})}{\left(  \sum_{j\in
s}(1-p_{j})\right)  ^{2}}\right\vert  & \leq\frac{1}{\left(  \sum_{j\in
s}(1-p_{j})\right)  } \leq\frac{2}{d_{N}}
\end{align*}
and similarly%
\begin{align*}
\sum_{h\in s}\frac{(1-p_{h})(p_{h}-p_{i})^{2}}{\left(  \sum_{j\in s}%
(1-p_{j})\right)  ^{3}}  & \leq\frac{1}{\left(  \sum_{j\in s}(1-p_{j})\right)
^{2}} \leq\frac{4}{d_{N}^{2}}.
\end{align*}
This yieds: $\forall i\in\{1,\; \ldots,\; N  \}$,
\[
1-\frac{2}{d_{N}}\leq\frac{\pi_{i}}{p_{i}}\frac{1-p_{i}}{1-\pi_{i}}\leq
1+\frac{2}{d_{N}}+\frac{4}{d_{N}^{2}}%
\]
and
\[
p_{i}(1-\pi_{i})(1-\frac{2}{d_{N}})\leq\pi_{i}(1-p_{i})\leq p_{i}(1-\pi
_{i})\left(1+\frac{2}{d_{N}}+\frac{4}{d_{N}^{2}}\right),
\]
leading then to
\[
-\frac{2}{d_{N}}(1-\pi_{i})p_{i}\leq\pi_{i}-p_{i}\leq p_{i}\left(1-\pi_{i}%
\right)\left(\frac{2}{d_{N}}+\frac{4}{d_{N}^{2}}\right)
\]
and finally to
\[
-\frac{(1-\pi_{i})}{\pi_{i}}\frac{2}{d_{N}}\leq\frac{1}{p_{i}}-\frac{1}%
{\pi_{i}}\leq\frac{(1-\pi_{i})}{\pi_{i}}\left(\frac{2}{d_{N}}+\frac{4}{d_{N}^{2}}\right).
\]
Since $1/d_{N}^{2}\leq 1/d_{N}$ as soon as $d_{N}\geq1$, the lemma is proved.  $\square$
\end{proof}
\medskip

By virtue of lemma \ref{lem:bias}, we obtain that:
\begin{equation*}
\left\vert \widehat{S}_{\boldsymbol{\pi}_N}^{\boldsymbol{\epsilon}_N^*}-\widehat{S}_{\boldsymbol{p} _{N}%
}^{\boldsymbol{\epsilon} _{N}^{\ast}}\right\vert
 \leq\frac{6}{d_{N}}\sum_{i=1}^{N}\frac{1}{\pi_{i}}|x_{i}|=M_{N}%
\end{equation*}
It follows that
\begin{align*}
\mathbb{P}\left\{  \widehat{S}_{\boldsymbol{\pi}_N}^{\boldsymbol{\epsilon}_N^*}-S_N>x\right\}    &
\leq\mathbb{P}\left\{  |\widehat{S}_{\boldsymbol{\pi}_N}^{\boldsymbol{\epsilon}_N^*}-\widehat{S}_{\boldsymbol{p} _{N}%
}^{\boldsymbol{\epsilon} _{N}^{\ast}}|+\widehat
{S}_{\boldsymbol{p} _{N}}^{\boldsymbol{\epsilon} _{N}^{\ast}}-S_{N}>x\right\}
\\
& \leq\mathbb{P}\left\{  M_{N}+\widehat{S}_{\boldsymbol{p} _{N}}%
^{\boldsymbol{\epsilon} _{N}^{\ast}}-S_{N}>x\right\}
\end{align*}
and a direct application of Theorem \ref{thm:rejective} finally gives the desired result.

\bibliographystyle{plain}
\bibliography{ERM_sampling}
\end{document}